\title{Learning and Leveraging Anisotropy Parameters in ANOVA Approximation}
\author{
    Felix~Bartel\thanks{Mathematisch-Geographische Fakultät, KU Eichstätt-Ingolstadt, 85270 Eichstät, Germany.\newline
    E-Mail: \texttt{felix.bartel@ku.de}}
    \and Pascal~Schröter\thanks{Faculty of Mathematics, Chemnitz University of Technology, 09107 Chemnitz, Germany.\newline
    E-Mail: \texttt{pascal.schroeter@math.tu-chemnitz.de}}
}
      \name{author}{1}{}{%
        {{hash=2d0f15d53cce01866c75e0d8ea276203}{%
           family={Bartel},
           familyi={B\bibinitperiod},
           given={Felix},
           giveni={F\bibinitperiod}}}%
      }
      \name{author}{1}{}{%
        {{hash=2d0f15d53cce01866c75e0d8ea276203}{%
           family={Bartel},
           familyi={B\bibinitperiod},
           given={Felix},
           giveni={F\bibinitperiod}}}%
      }
      \name{author}{4}{}{%
        {{hash=2d0f15d53cce01866c75e0d8ea276203}{%
           family={Bartel},
           familyi={B\bibinitperiod},
           given={Felix},
           giveni={F\bibinitperiod}}}%
        {{hash=1984994beffa46bc80b56d5bfec78313}{%
           family={Gilbert},
           familyi={G\bibinitperiod},
           given={Alexander\bibnamedelima D.},
           giveni={A\bibinitperiod\bibinitdelim D\bibinitperiod}}}%
        {{hash=b8b1a001266dd9ce8059650f314afa08}{%
           family={Kuo},
           familyi={K\bibinitperiod},
           given={Frances\bibnamedelima Y.},
           giveni={F\bibinitperiod\bibinitdelim Y\bibinitperiod}}}%
        {{hash=3ac2f35071478788fd7424158ae8933b}{%
           family={Sloan},
           familyi={S\bibinitperiod},
           given={Ian\bibnamedelima H.},
           giveni={I\bibinitperiod\bibinitdelim H\bibinitperiod}}}%
      }
      \name{author}{3}{}{%
        {{hash=2d0f15d53cce01866c75e0d8ea276203}{%
           family={Bartel},
           familyi={B\bibinitperiod},
           given={Felix},
           giveni={F\bibinitperiod}}}%
        {{hash=3f4d5040d2f4979b5a24918a6743ecf0}{%
           family={Hielscher},
           familyi={H\bibinitperiod},
           given={Ralf},
           giveni={R\bibinitperiod}}}%
        {{hash=96f72e99526ae44c79a2800ab25a59ae}{%
           family={Potts},
           familyi={P\bibinitperiod},
           given={Daniel},
           giveni={D\bibinitperiod}}}%
      }
      \name{author}{3}{}{%
        {{hash=2d0f15d53cce01866c75e0d8ea276203}{%
           family={Bartel},
           familyi={B\bibinitperiod},
           given={Felix},
           giveni={F\bibinitperiod}}}%
        {{hash=96f72e99526ae44c79a2800ab25a59ae}{%
           family={Potts},
           familyi={P\bibinitperiod},
           given={Daniel},
           giveni={D\bibinitperiod}}}%
        {{hash=0099002319ce24b649a6f496a751df2d}{%
           family={Schmischke},
           familyi={S\bibinitperiod},
           given={Michael},
           giveni={M\bibinitperiod}}}%
      }
      \name{author}{2}{}{%
        {{hash=511777115a2bed5c9b65de554082eb24}{%
           family={Blockeel},
           familyi={B\bibinitperiod},
           given={Hendrik},
           giveni={H\bibinitperiod}}}%
        {{hash=272c7ccc8d51a8cd31673616e459fb8b}{%
           family={Struyf},
           familyi={S\bibinitperiod},
           given={Jan},
           giveni={J\bibinitperiod}}}%
      }
      \name{author}{3}{}{%
        {{hash=8af9cb66db9dd812e2d641ab8810b134}{%
           family={Caflisch},
           familyi={C\bibinitperiod},
           given={Russel},
           giveni={R\bibinitperiod}}}%
        {{hash=a91e25c32465f7c981d6b79dd13cd3df}{%
           family={Morokoff},
           familyi={M\bibinitperiod},
           given={William},
           giveni={W\bibinitperiod}}}%
        {{hash=b7eff95462c2a27cec61c871dfb68eb2}{%
           family={Owen},
           familyi={O\bibinitperiod},
           given={Art},
           giveni={A\bibinitperiod}}}%
      }
      \name{author}{3}{}{%
        {{hash=309282421dd98783484e525d9da79f7c}{%
           family={De\bibnamedelima Vito},
           familyi={D\bibinitperiod\bibinitdelim V\bibinitperiod},
           given={E.},
           giveni={E\bibinitperiod}}}%
        {{hash=5c31aafa11cb9db0337a38554c8f79f4}{%
           family={Pereverzyev},
           familyi={P\bibinitperiod},
           given={S.},
           giveni={S\bibinitperiod}}}%
        {{hash=d48e386b69d43aee947c319ef8a4c16a}{%
           family={Rosasco},
           familyi={R\bibinitperiod},
           given={L.},
           giveni={L\bibinitperiod}}}%
      }
      \name{author}{1}{}{%
        {{hash=949db8761147ea6163f7ffd8d0b5880a}{%
           family={Greenbaum},
           familyi={G\bibinitperiod},
           given={Anne},
           giveni={A\bibinitperiod}}}%
      }
      \name{author}{3}{}{%
        {{hash=a89d53c4f1eaa27ffdbaa8c4376f3df6}{%
           family={Griebel},
           familyi={G\bibinitperiod},
           given={M.},
           giveni={M\bibinitperiod}}}%
        {{hash=15e29f39dae75ffa9a6eb1a8d49a7a87}{%
           family={Hamaekers},
           familyi={H\bibinitperiod},
           given={J.},
           giveni={J\bibinitperiod}}}%
        {{hash=218c9db12324acbe02684f252866451d}{%
           family={Heber},
           familyi={H\bibinitperiod},
           given={F.},
           giveni={F\bibinitperiod}}}%
      }
      \name{author}{1}{}{%
        {{hash=183ef3019b02f76429f5a7e6d902f6cd}{%
           family={Gu},
           familyi={G\bibinitperiod},
           given={Chong},
           giveni={C\bibinitperiod}}}%
      }
      \name{author}{3}{}{%
        {{hash=49803b201b73a8b29be1a563b58b1647}{%
           family={Hickernell},
           familyi={H\bibinitperiod},
           given={Fred\bibnamedelima J.},
           giveni={F\bibinitperiod\bibinitdelim J\bibinitperiod}}}%
        {{hash=76a75755df6d48c6247e0b7a5a9f5e74}{%
           family={Jiménez\bibnamedelima Rugama},
           familyi={J\bibinitperiod\bibinitdelim R\bibinitperiod},
           given={Lluís\bibnamedelima Antoni},
           giveni={L\bibinitperiod\bibinitdelim A\bibinitperiod}}}%
        {{hash=02c7a7e9c8b10c3d3ee2912d42a8be3e}{%
           family={Li},
           familyi={L\bibinitperiod},
           given={Da},
           giveni={D\bibinitperiod}}}%
      }
      \name{editor}{3}{}{%
        {{hash=66ec5cc4ceafccd23601aa764cc9b019}{%
           family={Dick},
           familyi={D\bibinitperiod},
           given={Josef},
           giveni={J\bibinitperiod}}}%
        {{hash=b8b1a001266dd9ce8059650f314afa08}{%
           family={Kuo},
           familyi={K\bibinitperiod},
           given={Frances\bibnamedelima Y.},
           giveni={F\bibinitperiod\bibinitdelim Y\bibinitperiod}}}%
        {{hash=b18c6cbd803188e1278965171a175d05}{%
           family={Woźniakowski},
           familyi={W\bibinitperiod},
           given={Henryk},
           giveni={H\bibinitperiod}}}%
      }
      \name{author}{3}{}{%
        {{hash=87642bb1e96175a2d95a8732c4977894}{%
           family={Keiner},
           familyi={K\bibinitperiod},
           given={Jens},
           giveni={J\bibinitperiod}}}%
        {{hash=a4a03360db56380264153114ae28441a}{%
           family={Kunis},
           familyi={K\bibinitperiod},
           given={Stefan},
           giveni={S\bibinitperiod}}}%
        {{hash=96f72e99526ae44c79a2800ab25a59ae}{%
           family={Potts},
           familyi={P\bibinitperiod},
           given={Daniel},
           giveni={D\bibinitperiod}}}%
      }
      \name{author}{4}{}{%
        {{hash=611ada91d1e542d341733a32d3b74766}{%
           family={Keller},
           familyi={K\bibinitperiod},
           given={Alexander},
           giveni={A\bibinitperiod}}}%
        {{hash=b8b1a001266dd9ce8059650f314afa08}{%
           family={Kuo},
           familyi={K\bibinitperiod},
           given={Frances\bibnamedelima Y.},
           giveni={F\bibinitperiod\bibinitdelim Y\bibinitperiod}}}%
        {{hash=1d2a87e709693bf769ced54da7f55c6e}{%
           family={Nuyens},
           familyi={N\bibinitperiod},
           given={Dirk},
           giveni={D\bibinitperiod}}}%
        {{hash=3ac2f35071478788fd7424158ae8933b}{%
           family={Sloan},
           familyi={S\bibinitperiod},
           given={Ian\bibnamedelima H.},
           giveni={I\bibinitperiod\bibinitdelim H\bibinitperiod}}}%
      }
      \name{author}{4}{}{%
        {{hash=b8b1a001266dd9ce8059650f314afa08}{%
           family={Kuo},
           familyi={K\bibinitperiod},
           given={Frances\bibnamedelima Y.},
           giveni={F\bibinitperiod\bibinitdelim Y\bibinitperiod}}}%
        {{hash=3ac2f35071478788fd7424158ae8933b}{%
           family={Sloan},
           familyi={S\bibinitperiod},
           given={Ian\bibnamedelima H.},
           giveni={I\bibinitperiod\bibinitdelim H\bibinitperiod}}}%
        {{hash=c2569fd2a57e3fdb9be1bea6a27bce2a}{%
           family={Wasilkowski},
           familyi={W\bibinitperiod},
           given={Grzegorz\bibnamedelima W.},
           giveni={G\bibinitperiod\bibinitdelim W\bibinitperiod}}}%
        {{hash=b18c6cbd803188e1278965171a175d05}{%
           family={Woźniakowski},
           familyi={W\bibinitperiod},
           given={Henryk},
           giveni={H\bibinitperiod}}}%
      }
      \name{author}{3}{}{%
        {{hash=38172cef13e10f95629bdde7a2cc151d}{%
           family={Lippert},
           familyi={L\bibinitperiod},
           given={Laura},
           giveni={L\bibinitperiod}}}%
        {{hash=96f72e99526ae44c79a2800ab25a59ae}{%
           family={Potts},
           familyi={P\bibinitperiod},
           given={Daniel},
           giveni={D\bibinitperiod}}}%
        {{hash=b5b9727377480918670c10b7f7a44a9f}{%
           family={Ullrich},
           familyi={U\bibinitperiod},
           given={Tino},
           giveni={T\bibinitperiod}}}%
      }
      \name{author}{2}{}{%
        {{hash=e2d16b6c1987b6c000c0b348799be39f}{%
           family={Liu},
           familyi={L\bibinitperiod},
           given={Ruixue},
           giveni={R\bibinitperiod}}}%
        {{hash=e37bc334c8895d799ce53f05285d5836}{%
           family={Owen},
           familyi={O\bibinitperiod},
           given={Art\bibnamedelima B},
           giveni={A\bibinitperiod\bibinitdelim B\bibinitperiod}}}%
      }
      \name{author}{2}{}{%
        {{hash=dc2df110a73179372ed43fd260f9bd1e}{%
           family={Mullin},
           familyi={M\bibinitperiod},
           given={Matthew},
           giveni={M\bibinitperiod}}}%
        {{hash=4df216303596d3f13ec1677ee7718dca}{%
           family={Sukthankar},
           familyi={S\bibinitperiod},
           given={Rahul},
           giveni={R\bibinitperiod}}}%
      }
      \name{author}{1}{}{%
        {{hash=b8a31abd1164c72194e8659a3eed61f6}{%
           family={Nikol’skii},
           familyi={N\bibinitperiod},
           given={Sergei\bibnamedelima Mihailovic},
           giveni={S\bibinitperiod\bibinitdelim M\bibinitperiod}}}%
      }
      \name{author}{2}{}{%
        {{hash=b921f8d0f55cbd9ffb8bc589460082f9}{%
           family={Novak},
           familyi={N\bibinitperiod},
           given={Erich},
           giveni={E\bibinitperiod}}}%
        {{hash=b18c6cbd803188e1278965171a175d05}{%
           family={Woźniakowski},
           familyi={W\bibinitperiod},
           given={Henryk},
           giveni={H\bibinitperiod}}}%
      }
      \name{author}{2}{}{%
        {{hash=97020fe21f6f188f1039e583eb8e6fba}{%
           family={Paige},
           familyi={P\bibinitperiod},
           given={Christopher\bibnamedelima C.},
           giveni={C\bibinitperiod\bibinitdelim C\bibinitperiod}}}%
        {{hash=a93c1ce8ca06c149de3fef394c5112d2}{%
           family={Saunders},
           familyi={S\bibinitperiod},
           given={Michael\bibnamedelima A.},
           giveni={M\bibinitperiod\bibinitdelim A\bibinitperiod}}}%
      }
      \name{author}{2}{}{%
        {{hash=96f72e99526ae44c79a2800ab25a59ae}{%
           family={Potts},
           familyi={P\bibinitperiod},
           given={Daniel},
           giveni={D\bibinitperiod}}}%
        {{hash=0099002319ce24b649a6f496a751df2d}{%
           family={Schmischke},
           familyi={S\bibinitperiod},
           given={Michael},
           giveni={M\bibinitperiod}}}%
      }
      \name{author}{2}{}{%
        {{hash=96f72e99526ae44c79a2800ab25a59ae}{%
           family={Potts},
           familyi={P\bibinitperiod},
           given={Daniel},
           giveni={D\bibinitperiod}}}%
        {{hash=0099002319ce24b649a6f496a751df2d}{%
           family={Schmischke},
           familyi={S\bibinitperiod},
           given={Michael},
           giveni={M\bibinitperiod}}}%
      }
      \name{author}{2}{}{%
        {{hash=96f72e99526ae44c79a2800ab25a59ae}{%
           family={Potts},
           familyi={P\bibinitperiod},
           given={Daniel},
           giveni={D\bibinitperiod}}}%
        {{hash=0099002319ce24b649a6f496a751df2d}{%
           family={Schmischke},
           familyi={S\bibinitperiod},
           given={Michael},
           giveni={M\bibinitperiod}}}%
      }
      \name{author}{2}{}{%
        {{hash=bd8934dd56345b773c1834bb9f498a4e}{%
           family={Rabitz},
           familyi={R\bibinitperiod},
           given={Herschel},
           giveni={H\bibinitperiod}}}%
        {{hash=3435888bf9e4c2f88413d1aacb7d453f}{%
           family={F.\bibnamedelimi Alis},
           familyi={F\bibinitperiod\bibinitdelim A\bibinitperiod},
           given={Oemer},
           giveni={O\bibinitperiod}}}%
      }
      \name{author}{1}{}{%
        {{hash=419317fdd051d6322062748170149516}{%
           family={Rosset},
           familyi={R\bibinitperiod},
           given={Saharon},
           giveni={S\bibinitperiod}}}%
      }
      \name{author}{3}{}{%
        {{hash=72670cf668b6bad2a2aa214c9e4f14bc}{%
           family={Schäfer},
           familyi={S\bibinitperiod},
           given={Martin},
           giveni={M\bibinitperiod}}}%
        {{hash=b5b9727377480918670c10b7f7a44a9f}{%
           family={Ullrich},
           familyi={U\bibinitperiod},
           given={Tino},
           giveni={T\bibinitperiod}}}%
        {{hash=0894b81cd3c84c4b456e5a3aa02c3dbd}{%
           family={Vedel},
           familyi={V\bibinitperiod},
           given={Béatrice},
           giveni={B\bibinitperiod}}}%
      }
      \name{author}{1}{}{%
        {{hash=0099002319ce24b649a6f496a751df2d}{%
           family={Schmischke},
           familyi={S\bibinitperiod},
           given={Michael},
           giveni={M\bibinitperiod}}}%
      }
      \name{author}{2}{}{%
        {{hash=5324ea2357a99b7507563098e05b3c2b}{%
           family={Tasche},
           familyi={T\bibinitperiod},
           given={Manfred},
           giveni={M\bibinitperiod}}}%
        {{hash=31e6496992bd94af8565eaf60c227464}{%
           family={Weyrich},
           familyi={W\bibinitperiod},
           given={Norman},
           giveni={N\bibinitperiod}}}%
      }
\patchcmd\blx@bblinput{\blx@blxinit}
                      {\blx@blxinit
                      }{}{\fail}
\numberwithin{equation}{section}
\newtheorem{theorem}{Theorem}[section]
\newtheorem{lemma}[theorem]{Lemma}
\newtheorem{defi}[theorem]{Definition}
\renewcommand{\tilde}{\widetilde}
\DeclareMathOperator{\Span}{span}
\DeclareMathOperator{\CV}{CV}
\DeclareMathOperator{\FCV}{FCV}
\DeclareMathOperator{\Supp}{supp}
\DeclareMathOperator{\Argmin}{arg\,min}
\begin{document} 

\maketitle
\begin{abstract}
    We present a Fourier-based approach for high-dimensional function approximation.
    To this end, we analyze the truncated ANOVA (analysis of variance) decomposition and learn the anisotropic smoothness properties of the target function from scattered data.
    This smoothness information is then incorporated into our approximation algorithm to improve the accuracy.
    Specifically, we employ least squares approximation using trigonometric polynomials in combination with frequency boxes of optimized aspect ratios.
    These frequency boxes allow for the application of the Nonequispaced Fast Fourier Transform (NFFT), which significantly accelerates the computation of the method.
    Our approach enables the efficient optimization of dozens of parameters to achieve high approximation accuracy with minimal overhead.
    Numerical experiments demonstrate the practical effectiveness of the proposed method.

    \noindent\emph{MSC2020:}
    41A63, 65T40, 65T50  \end{abstract}

\section{Introduction} 

Scattered data approximation methods are typically designed with a specific class of functions in mind.
Any available prior information about the function to be approximated may be leveraged to fine-tune the approximation scheme, improving the accuracy.
In this work, we focus on high-dimensional functions with anisotropic smoothness properties, i.e., functions whose smoothness may vary significantly across different directions in the input space.

Rather than presuming knowledge of such smoothness properties a priori, we introduce a novel data-driven approach to learn the anisotropic smoothness directly from scattered function samples.
This learned information is then used to adapt the approximation procedure accordingly, yielding a substantial improvement in accuracy.
Moreover, since a better approximation enables a more precise estimation of the smoothness parameters, we embed this process into an iterative refinement loop.
This approach allows for the simultaneous estimation and tuning of dozens of smoothness-related parameters, which is performed efficiently and integrated directly into the approximation process.
Crucially, this enhancement only brings a small computational overhead to the algorithm, ensuring that the method remains scalable and computationally efficient.

To validate our approach, we conduct numerical experiments in $d=2,5,9$ dimensional space.
The code for the presented method integrated into the \texttt{ANOVAapprox.jl} software available on GitHub as an \texttt{julia} package.
The results demonstrate not only the significant practical improvement in approximation quality but also align with our theoretical analysis.

Existing work includes:
\begin{itemize}
\item
    The detection of anisotropic smoothness parameters via the (infinitely many) coefficients of a hyperbolic wavelet basis, which was investigated in \cite{SUV21}.
    The key part is that the wavelet basis is universal in that there is no need for adaption of the basis or a priori knowledge on the anisotropy.
    However, this method does not apply when only samples of a function are given.
\item
    In \cite{HJL18} adaptive cubature based on ``steady decay of Fourier coefficients'' was investigated, which introduces a stopping criterion for sampling from rank-1 lattices and digital nets based on a predetermined target accuracy.
    The approach we present in this paper is not adaptive, in that the samples are given but rather fixed to begin with.
\end{itemize}

For our purposes we use the trigonometric ANOVA decomposition, which is well-established for high-dimensional approximation, see e.g.~\cite{PS21,BPS22,PS22}.
The parameter of this method is the frequency index set $\mathcal I\subset\mathds Z^d$, which is a union of differently sized and shaped (axis parallel) boxes, cf.~\Cref{fig:anova_frequencies}.
\begin{figure} \centering
    \begingroup
  \makeatletter
  \providecommand\color[2][]{\GenericError{(gnuplot) \space\space\space\@spaces}{Package color not loaded in conjunction with
      terminal option `colourtext'}{See the gnuplot documentation for explanation.}{Either use 'blacktext' in gnuplot or load the package
      color.sty in LaTeX.}\renewcommand\color[2][]{}}\providecommand\includegraphics[2][]{\GenericError{(gnuplot) \space\space\space\@spaces}{Package graphicx or graphics not loaded}{See the gnuplot documentation for explanation.}{The gnuplot epslatex terminal needs graphicx.sty or graphics.sty.}\renewcommand\includegraphics[2][]{}}\providecommand\rotatebox[2]{#2}\@ifundefined{ifGPcolor}{\newif\ifGPcolor
    \GPcolortrue
  }{}\@ifundefined{ifGPblacktext}{\newif\ifGPblacktext
    \GPblacktexttrue
  }{}\let\gplgaddtomacro\g@addto@macro
\gdef\gplbacktext{}\gdef\gplfronttext{}\makeatother
  \ifGPblacktext
\def\colorrgb#1{}\def\colorgray#1{}\else
\ifGPcolor
      \def\colorrgb#1{\color[rgb]{#1}}\def\colorgray#1{\color[gray]{#1}}\expandafter\def\csname LTw\endcsname{\color{white}}\expandafter\def\csname LTb\endcsname{\color{black}}\expandafter\def\csname LTa\endcsname{\color{black}}\expandafter\def\csname LT0\endcsname{\color[rgb]{1,0,0}}\expandafter\def\csname LT1\endcsname{\color[rgb]{0,1,0}}\expandafter\def\csname LT2\endcsname{\color[rgb]{0,0,1}}\expandafter\def\csname LT3\endcsname{\color[rgb]{1,0,1}}\expandafter\def\csname LT4\endcsname{\color[rgb]{0,1,1}}\expandafter\def\csname LT5\endcsname{\color[rgb]{1,1,0}}\expandafter\def\csname LT6\endcsname{\color[rgb]{0,0,0}}\expandafter\def\csname LT7\endcsname{\color[rgb]{1,0.3,0}}\expandafter\def\csname LT8\endcsname{\color[rgb]{0.5,0.5,0.5}}\else
\def\colorrgb#1{\color{black}}\def\colorgray#1{\color[gray]{#1}}\expandafter\def\csname LTw\endcsname{\color{white}}\expandafter\def\csname LTb\endcsname{\color{black}}\expandafter\def\csname LTa\endcsname{\color{black}}\expandafter\def\csname LT0\endcsname{\color{black}}\expandafter\def\csname LT1\endcsname{\color{black}}\expandafter\def\csname LT2\endcsname{\color{black}}\expandafter\def\csname LT3\endcsname{\color{black}}\expandafter\def\csname LT4\endcsname{\color{black}}\expandafter\def\csname LT5\endcsname{\color{black}}\expandafter\def\csname LT6\endcsname{\color{black}}\expandafter\def\csname LT7\endcsname{\color{black}}\expandafter\def\csname LT8\endcsname{\color{black}}\fi
  \fi
    \setlength{\unitlength}{0.0500bp}\ifx\gptboxheight\undefined \newlength{\gptboxheight}\newlength{\gptboxwidth}\newsavebox{\gptboxtext}\fi \setlength{\fboxrule}{0.5pt}\setlength{\fboxsep}{1pt}\definecolor{tbcol}{rgb}{1,1,1}\begin{picture}(7920.00,3960.00)\gplgaddtomacro\gplbacktext{}\gplgaddtomacro\gplfronttext{}\gplbacktext
    \put(0,0){\includegraphics[width={396.00bp},height={198.00bp}]{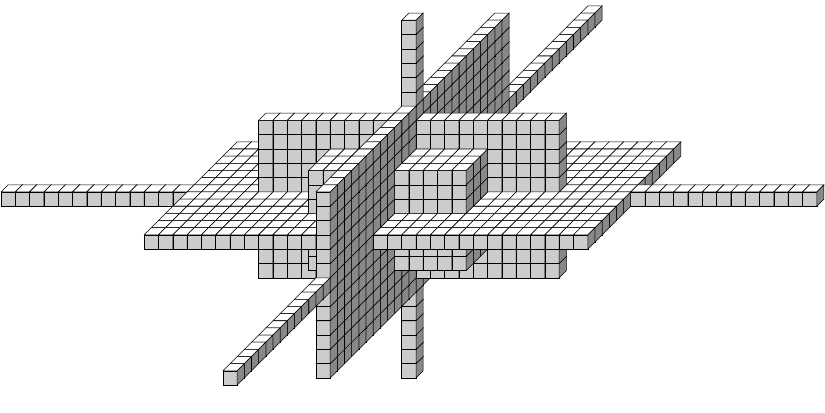}}\gplfronttext
  \end{picture}\endgroup
     \caption{Example frequencies in dimension $d=3$ used in ANOVA approximation with $12$ bandwidth parameters.}\label{fig:anova_frequencies}
\end{figure} This allows for the use of fast Fourier techniques implemented in the \texttt{GroupedTransforms.jl} package \cite{BPS22}, utilizing the Nonequispaced Fast Fourier Transform (\texttt{NFFT}) \cite{KKP09}.
So far cubes of frequencies with equal side length have been used with brute force or manual, heuristic choices for their size.
We automate this choice, including frequency boxes with non-equal side lengths by distributing the frequency budget such that it optimizes the error decay.
To model this we use a map constructing an increasing sequence of frequency index sets
\begin{equation*}
    \Psi\colon\mathds N\to\mathcal P(\mathds Z^d), m\mapsto\mathcal I
    \quad\text{such that}\quad
    |\Psi(m)| = m \,.
\end{equation*}
When every ANOVA term has a certain anisotropic Sobolev smoothness, see \Cref{sec:anisosobolev}, we observe a polynomial error decay $\|f-P_{\Psi(m)}f\|_{L_2} \lesssim m^{-s_{\Psi}}$, where the degree $s_{\Psi}$ depends on the given smoothness and the chosen $\Psi$.
Our goal is to choose $\Psi$ such that we have optimal approximation properties, i.e., the error decay $s_{\Psi}$ is maximal.

\paragraph{Structure of the paper.}
We start by introducing the ANOVA approximation, anisotropic Sobolev spaces, and the cross-validation score in \Cref{sec:pre}.
In \Cref{sec:la} we develop a method in order to learn the smoothness properties of a given function from samples, which we then use in \Cref{sec:ua} to improve the approximation accuracy.
We end with three numerical experiments in \Cref{sec:numerics} and some fineal remarks in \Cref{sec:conclusion}.

\paragraph{Notation.}
In this paper  we write $A_n\lesssim B_n$ or $B_n\gtrsim A_n$ if there exists $C>0$ such that $A_n \le C B_n$ for all $n\in\mathds N$;
when both relations hold we write $A_n\sim B_n$;
$\langle\cdot,\cdot\rangle$ is the Euclidean inner product;
$[d] \coloneqq \{1,\dots,d\}$;
$\mathds N$ are the natural numbers, $\mathds N_0 \coloneqq \mathds N\cup\{0\}$, and $2\mathds N_0$ are all even non-negative integers;
$\mathds T \coloneqq\mathds R/\mathds N$ is the one-dimensional torus.

 \section{Preliminaries}\label{sec:pre} 

\subsection{ANOVA approximation}\label{sec:anova} 

The analysis of variance (ANOVA) has its origin in statistics with the goal of identifying dimension interactions of multivariate, high-dimensional functions.
We only give a brief introduction with the domain restriction being the $d$-dimensional torus $\mathds T^d$ for simplicity, while in-depth literature and extensions can be found in e.g.~\cite{CMO97, RA99, LO06, NW08, KSWW09, Gu13, Schmischke22}.
The core idea is that certain functions are representable as a sum of lower-dimensional functions, like
\begin{equation*}
    f(x_1,\dots,x_9)
    =
    \Big|x_1-\frac 12\Big|
    +\cos(2\pi x_1)\cos(2\pi x_2) +\frac{\sin(2\pi x_3)}{2 + \sin(2\pi x_4)\sin(2\pi x_5)} \,.
\end{equation*}
The above function $f$ is nine-dimensional but may be decomposed into a sum of one one-dimensional function, one two-dimensional, and one three-dimensional one with five variables not even occurring.
This assumption occurs, e.g., naturally in calculations of the electronic structure problem for molecules in \cite{GHH11} where component-wise interactions are intrinsic.
Even when this assumption is not given, the truncation to lower-dimensional terms has been proven to beat past methods in practice on benchmark problems, cf.~\cite[Chapter~6]{Schmischke22}.

A central tool for the analysis are integral projections.
Let $\mathfrak u \subseteq [d]$ be a subset of coordinate indices and $\mathfrak u^\complement = [d]\setminus \mathfrak u$ its complement.
Further, for vectors $\bm x\in\mathds T^d$ indexed with a subset $\mathfrak u\subseteq[d]$ we define $\bm x_{\mathfrak u} \coloneqq (x_j)_{j\in \mathfrak u}$.
The \emph{integral projection} of $f$ with respect to $\mathfrak u\subseteq[d]$ is then given by
\begin{equation*}
    P_{\mathfrak u} f(\bm x)
    \coloneqq \int_{\mathds T^{d-|\mathfrak u|}} f(\bm x) \;\mathrm d\bm x_{\mathfrak u^\complement}\, .
\end{equation*}
For $\mathfrak u\subseteq [d]$, the \emph{ANOVA terms} and \emph{ANOVA decomposition} are given by
\begin{equation*}
    f_{\mathfrak u}
    = P_{\mathfrak u} f - \sum_{\mathfrak v\subseteq \mathfrak u} f_{\mathfrak v}
    \quad\text{and}\quad
    f = \sum_{\mathfrak u\subseteq [d]} f_{\mathfrak u} \,.
\end{equation*}
This orthogonal decomposition connects to a decomposition in Fourier space, where it divides the Fourier coefficients into disjoint sets of frequencies depending on their support $\Supp\bm k \coloneqq \{j\in [d] : k_j \neq 0\}$:
\begin{equation*}
    f_{\mathfrak u}
    = \sum_{\substack{\bm k\in\mathds Z^d \\ \Supp \bm k = \mathfrak u}}
    \hat f_{\bm k}
    \exp(2\pi\mathrm i\langle\bm k, \cdot\rangle) \,,
\end{equation*}
with $\hat f_{\bm k} = \langle f, \exp(2\pi\mathrm i\langle\bm k, \cdot\rangle) \rangle_{L_2} = \int_{\mathds T^d} f(\bm x)\exp(-2\pi\mathrm i\langle\bm k, \bm x\rangle)\;\mathrm d\bm x$.
For other domains and orthonormal systems this works analogously, see e.g.~\cite{Schmischke22}, or more involved with wavelets in \cite{LPU21}.
The number of ANOVA terms is $2^d$ and therefore grows exponentially in the dimension, which reflects the well-known curse of dimensionality.
The idea to circumvent this is to truncate the decomposition and only take a certain number of terms into account.
It is common to truncate to lower-dimensional terms $f_{\mathfrak u}$ with $|\mathfrak u| \le d_s$, with $d_s$ being called superposition dimension.
Then, the number of terms with respect to the spatial dimension $d$ is $\sum_{j=1}^{d_s} \binom{d_s}{d} \in \mathcal O(d^{d_s})$, which grows polynomially in $d$ instead of exponentially.
Further, among the terms $f_{\mathfrak u}$ it is possible to find the ones contributing most to the overall function via sensitivity analysis, reducing the number of terms even more, which is basically comparing the normalized $L_2$ norms of the ANOVA terms, cf.~\cite{BPS22,PS22b}.

In order to compute an approximation from samples, we truncate the Fourier series of each ANOVA term.
To this end we define for a bandwidth vector $\bm m = (m_1, \dots, m_d)\in 2\mathds N_0$
\begin{equation}\label{eq:anovafreqs1}
    \tilde{\mathcal I}_{\bm m}
    \coloneqq
    \bigtimes_{j=1}^{d} 
    \begin{cases}
        \{0\} & \text{if } m_j=0 \,, \\
        [-m_j/2, m_j/2) \cap \mathds Z \setminus \{0\} & \text{otherwise,} \phantom]
    \end{cases}
\end{equation}
which is a box of frequencies for the ANOVA term $f_{\mathfrak u}$ with $\mathfrak u = \Supp\bm m$.
The final frequency index set for the overall ANOVA approximation then becomes
\begin{equation}\label{eq:anovafreqs2}
    \mathcal I = \bigcup_{\mathfrak u\in U} \tilde{\mathcal I}_{\bm m_{\mathfrak u}} \,,
\end{equation}
with an example depicted in \Cref{fig:anova_frequencies}.

Given points $\bm X = \{\bm x^1, \dots, \bm x^n\}\subseteq\mathds T^d$ and samples $\bm y = [y_1,\dots,y_n]^\top\in\mathds C^n$, we define the \emph{least squares ANOVA approximation}
\begin{equation}\label{eq:lsqr}
    S_{\mathcal I}^{\bm X} \bm y
    \coloneqq \Argmin\Big\{
        \sum_{i=1}^{n} |g(\bm x^i)-y_i|^2 :
        g\in\Span\{\exp(2\pi\mathrm i\langle\bm k,\cdot\rangle)\}_{\bm k\in\mathcal I}
    \Big\} \,.
\end{equation}
Given the full rank of the system matrix $\bm L \coloneqq [ \exp(2\pi\mathrm i\langle\bm k,\bm x^i\rangle) ]_{i\in [n], \bm k\in \mathcal I} \in\mathds C^{n\times|\mathcal I|}$, the Fourier coefficients of the approximation are computable by solving a system of equations
\begin{equation*}
    S_{\mathcal I}^{\bm X}\bm y = \sum_{\bm k\in\mathcal I} \hat g_{\bm k}\exp(2\pi\mathrm i\langle\bm k,\cdot\rangle)
    \quad\text{with}\quad
    \bm{\hat g}
    = [\hat g_{\bm k}]_{\bm k\in\mathcal I}
    = (\bm L^\ast\bm L)^{-1}\bm L^\ast\bm y \,.
\end{equation*}
We solve that system with the iterative \texttt{LSQR} method \cite{PS82}, using only matrix-vector products.
With uniformly random points and at least logarithmic oversampling $n\ge 10 |\mathcal I|(\log|\mathcal I|+t)$, $t>0$ we know to have with probability exceeding $1-2\exp(-t)$ the condition number $\sigma_{\max}^2(\bm L)/\sigma_{\min}^2(\bm L) \le 3$, cf.~\cite[Lemmata~6.2 and 6.4]{Barteldiss}.
With that well-conditioned system matrix $\bm L$, the solution of a system of equations up to machine precision $\text{\texttt{eps}} = 10^{-16}$ requires at most $56$ iterations, cf.~\cite[Theorem~3.1.1]{Gre97}.
In our numerical experiments the maximal number of iterations does not exceed $25$.
Thus, the overall computational cost of the approximation is governed by a constant multiple of the computational cost of one matrix-vector product.

Because of the box structure in the frequencies, the Nonequispaced Fast Fourier Transform (\texttt{NFFT}, cf.~\cite{KKP09}) is applicable, making the approximation algorithm fast and parallelizable to a nearly arbitrary extent, cf.~\cite{BPS22, Schmischke22}.
For $n$ points and accuracy $\varepsilon$ for the matrix-vector product, this yields a computational cost of
\begin{equation*}
    \mathcal O\Big(\sum_{\mathfrak u\in U} \Big( \prod_{j\in \mathfrak u} m_{\mathfrak u,j} \Big) \log \Big( \prod_{j\in \mathfrak u} m_{\mathfrak u,j} \Big) + (\log\varepsilon)^{d_s}n\Big)
    = \mathcal O\Big(|\mathcal I|\log|\mathcal I| + (\log\varepsilon)^{d_s}n\Big)
\end{equation*}
in an FFT-like fashion.
The naive matrix-vector product would yield a computational cost and memory requirements of $\mathcal O(|\mathcal I|n)$.

The error of the ANOVA approximation decomposes into the individual ANOVA terms as well.

\begin{lemma}\label{anovaerror} The error of the ANOVA approximation $S_{\mathcal I}^{\bm X}\bm y$ \eqref{eq:lsqr} with $\mathcal I = \bigcup_{\mathfrak u\in U} \tilde{\mathcal I}_{\bm m_{\mathfrak u}}$ splits into the error of the approximation of the individual ANOVA terms $f_{\mathfrak u}$, i.e.,
    \begin{equation*}
        \|S_{\mathcal I}^{\bm X}\bm y - f\|_{L_2}^2
        = \sum_{\mathfrak u\in U} \|P_{\tilde{\mathcal I}_{\bm m_{\mathfrak u}}} S_{\mathcal I}^{\bm X}\bm y - f_{\mathfrak u}\|_{L_2}^{2} + \sum_{\mathfrak u\in \mathcal P([d])\setminus U} \|f_{\mathfrak u}\|_{L_2}^2 \,.
    \end{equation*}
\end{lemma}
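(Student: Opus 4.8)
The plan is to exploit the orthogonality of the ANOVA decomposition in $L_2(\mathds T^d)$, which in Fourier space means that the frequencies are partitioned by their support: $\bm k$ belongs to the block of $f_{\mathfrak u}$ precisely when $\Supp\bm k = \mathfrak u$. First I would observe that the approximant $g = S_{\mathcal I}^{\bm X}\bm y$ is a trigonometric polynomial supported on $\mathcal I = \bigcup_{\mathfrak u\in U}\tilde{\mathcal I}_{\bm m_{\mathfrak u}}$, and that the boxes $\tilde{\mathcal I}_{\bm m_{\mathfrak u}}$ are \emph{disjoint} for distinct $\mathfrak u\in U$: by construction \eqref{eq:anovafreqs1} every $\bm k\in\tilde{\mathcal I}_{\bm m_{\mathfrak u}}$ has $\Supp\bm k = \mathfrak u$ exactly (the nonzero coordinates are precisely those $j$ with $m_{\mathfrak u,j}\neq 0$, and all such coordinates are forced away from $0$). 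Hence the integral projection $P_{\tilde{\mathcal I}_{\bm m_{\mathfrak u}}}g$ — which I read as the $L_2$-orthogonal projection onto $\Span\{\exp(2\pi\mathrm i\langle\bm k,\cdot\rangle)\}_{\bm k\in\tilde{\mathcal I}_{\bm m_{\mathfrak u}}}$ — simply extracts the Fourier coefficients $\hat g_{\bm k}$ with $\bm k\in\tilde{\mathcal I}_{\bm m_{\mathfrak u}}$, and since these boxes partition $\mathcal I$, we get $g = \sum_{\mathfrak u\in U} P_{\tilde{\mathcal I}_{\bm m_{\mathfrak u}}}g$ with pairwise orthogonal summands. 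Moreover $P_{\tilde{\mathcal I}_{\bm m_{\mathfrak u}}}g$ lives entirely in the support block $\{\bm k:\Supp\bm k=\mathfrak u\}$, the same block that carries $f_{\mathfrak u}$.

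Next I would write $f - g = \sum_{\mathfrak u\subseteq[d]} f_{\mathfrak u} - \sum_{\mathfrak u\in U} P_{\tilde{\mathcal I}_{\bm m_{\mathfrak u}}}g$ and group the terms by Fourier support. For $\mathfrak u\in U$ the contribution to $f-g$ from the support block $\{\bm k:\Supp\bm k=\mathfrak u\}$ is exactly $f_{\mathfrak u} - P_{\tilde{\mathcal I}_{\bm m_{\mathfrak u}}}g$ (nothing else in $g$ touches that block, since the boxes are disjoint and support-homogeneous); for $\mathfrak u\in\mathcal P([d])\setminus U$ the contribution is simply $f_{\mathfrak u}$, because $g$ has no frequencies at all with that support. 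These support blocks are mutually orthogonal in $L_2$ — this is the standard Parseval-type orthogonality underlying the ANOVA decomposition, already invoked in the excerpt — so the squared norm splits as a sum over the blocks, giving
\begin{equation*}
    \|g - f\|_{L_2}^2
    = \sum_{\mathfrak u\in U}\|P_{\tilde{\mathcal I}_{\bm m_{\mathfrak u}}}g - f_{\mathfrak u}\|_{L_2}^2
    + \sum_{\mathfrak u\in\mathcal P([d])\setminus U}\|f_{\mathfrak u}\|_{L_2}^2,
\end{equation*}
which is the claim.

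The only genuine subtlety — the step I would be most careful about — is the bookkeeping that the boxes $\tilde{\mathcal I}_{\bm m_{\mathfrak u}}$, $\mathfrak u\in U$, are pairwise disjoint and that each is contained in its own support class. If two different index sets in $U$ were nested or otherwise overlapping in their bandwidth patterns, one could worry about double-counted frequencies; but the exclusion of $0$ in the nontrivial coordinates in \eqref{eq:anovafreqs1} forces $\Supp\bm k=\mathfrak u$ for every $\bm k\in\tilde{\mathcal I}_{\bm m_{\mathfrak u}}$, so distinct $\mathfrak u$ give frequency sets in distinct support classes and hence disjoint sets. After that observation the argument is just Parseval applied blockwise, and everything else is routine. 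I would also briefly note that the identity is purely a statement about the trigonometric polynomial $g$ and the function $f$, independent of how $g$ was obtained (least squares, interpolation, etc.); the least-squares definition \eqref{eq:lsqr} plays no role beyond guaranteeing $g\in\Span\{\exp(2\pi\mathrm i\langle\bm k,\cdot\rangle)\}_{\bm k\in\mathcal I}$.
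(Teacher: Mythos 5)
Your argument is correct and is precisely the expansion of the paper's one-line proof, which simply invokes the orthogonality of the ANOVA decomposition; the support-homogeneity and disjointness of the boxes $\tilde{\mathcal I}_{\bm m_{\mathfrak u}}$ that you carefully verify is exactly the content behind that appeal to orthogonality. No discrepancy in approach.
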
 

\begin{proof} The proof follows from the orthogonality of the ANOVA decomposition.
\end{proof} 

Thus, to describe the overall error behavior, it suffices to investigate the individual ANOVA terms.

\subsection{Anisotropic Sobolev spaces}\label{sec:anisosobolev} 

We model the individual ANOVA terms to be in \emph{anisotropic Sobolev spaces}
\begin{equation*}
    H^{s_1, \dots, s_d}
    \coloneqq \Big\{
    f\in L_2 :
    \frac{\partial^{s_1}}{\partial x_1^{s_1}}f, \dots, \frac{\partial^{s_d}}{\partial x_d^{s_d}}f \in L_2
    \Big\} \,,
\end{equation*}
with smoothness parameters $s_1, \dots, s_d\in\mathds N_0$.
These spaces capture different smoothness properties for different dimensions, see e.g.~\cite{Nikolskii75}.
Truncating the frequencies to boxes comes naturally with these spaces, justifying the use of the \texttt{NFFT}.

\begin{lemma}\label{lemma:Fouriercharacter} For $s_1,\dots,s_d\in\mathds N_0$ we have $f\in H^{s_1,\dots,s_d}$ if and only if
    \begin{equation}\label{eq:anisotropicnorm}
        \|f\|_{H^{s_1,\dots,s_d}}^2
        \coloneqq \sum_{\bm k\in\mathds Z^d} \max \{1, |k_1|^{2s_1}, \dots, |k_d|^{2s_d}\} | \hat f_{\bm k} |^2
        < \infty \,.
    \end{equation}
\end{lemma}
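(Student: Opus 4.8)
The plan is to transfer everything to Fourier space via Parseval's identity and to identify $j$-th partial differentiation of order $s_j$ with multiplication of the Fourier coefficients by $(2\pi\mathrm i k_j)^{s_j}$. Recall first that $\{\exp(2\pi\mathrm i\langle\bm k,\cdot\rangle)\}_{\bm k\in\mathds Z^d}$ is an orthonormal basis of $L_2(\mathds T^d)$, so that a function $g\in L_2$ satisfies $\|g\|_{L_2}^2=\sum_{\bm k\in\mathds Z^d}|\hat g_{\bm k}|^2$, and, conversely, any square-summable coefficient family defines an element of $L_2$ with those coefficients.

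Next I would treat a single coordinate $j\in[d]$ with order $s_j\in\mathds N_0$. On the torus, integration by parts produces no boundary terms, so if the (weak) derivative $\partial_{x_j}^{s_j}f$ exists and lies in $L_2$, then its Fourier coefficients are $(2\pi\mathrm i k_j)^{s_j}\hat f_{\bm k}$; by Parseval this forces $\sum_{\bm k}|2\pi k_j|^{2s_j}|\hat f_{\bm k}|^2<\infty$. For the converse, if this weighted sum is finite then the series $\sum_{\bm k}(2\pi\mathrm i k_j)^{s_j}\hat f_{\bm k}\exp(2\pi\mathrm i\langle\bm k,\cdot\rangle)$ converges in $L_2$, and testing against trigonometric polynomials (which are dense in the space of test functions) identifies its limit as $\partial_{x_j}^{s_j}f$. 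Applying this for every $j$ shows that $f\in H^{s_1,\dots,s_d}$ if and only if $f\in L_2$ and $\sum_{\bm k}|k_j|^{2s_j}|\hat f_{\bm k}|^2<\infty$ for all $j\in[d]$.

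It then remains to compare this with the finiteness of the quantity in \eqref{eq:anisotropicnorm}. For every $\bm k\in\mathds Z^d$ one has the elementary two-sided bound
\begin{equation*}
    \max\{1,|k_1|^{2s_1},\dots,|k_d|^{2s_d}\}
    \le 1+\sum_{j=1}^{d}|k_j|^{2s_j}
    \le (d+1)\max\{1,|k_1|^{2s_1},\dots,|k_d|^{2s_d}\} \,.
\end{equation*}
Multiplying by $|\hat f_{\bm k}|^2$ and summing over $\bm k$ shows that $\|f\|_{H^{s_1,\dots,s_d}}^2<\infty$ if and only if the $d+1$ sums $\sum_{\bm k}|\hat f_{\bm k}|^2$ and $\sum_{\bm k}|k_j|^{2s_j}|\hat f_{\bm k}|^2$, $j\in[d]$, are all finite, which by the previous paragraph is precisely the statement $f\in H^{s_1,\dots,s_d}$.

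The Parseval step and the finite-sum inequality are routine. The one point requiring care is the identification of $\partial_{x_j}^{s_j}f$ with the Fourier multiplier, in particular the converse direction: one must check that square-summability of the weighted coefficients genuinely yields the weak derivative and not merely a candidate limit function. This is handled by combining the already-established forward implication with the density of trigonometric polynomials, so that the $L_2$-limit of the formally differentiated series agrees with $\partial_{x_j}^{s_j}f$ in the sense of distributions.
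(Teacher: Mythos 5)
Your proposal is correct and follows essentially the same route as the paper: identify $\partial_{x_j}^{s_j}$ with the Fourier multiplier $(2\pi\mathrm i k_j)^{s_j}$, invoke Parseval, and compare $\max\{1,|k_1|^{2s_1},\dots,|k_d|^{2s_d}\}$ with $1+\sum_{j}|k_j|^{2s_j}$ termwise. Your version is somewhat more careful on the converse identification of the weak derivative and makes the two-sided nature of the comparison explicit, but the substance is the same.
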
 

\begin{proof} For the derivative of trigonometric polynomials we have $\partial^{s_j}/(\partial x_j^{s_j}) \exp(2\pi\mathrm i\langle\bm k, \cdot\rangle) = (2\pi\mathrm i k_j)^{s_j} \exp(2\pi\mathrm i\langle\bm k, \cdot\rangle)$.
    Thus, we have the following Fourier sum for the derivatives of a given function $f = \sum_{\bm k\in\mathds Z^d}\hat f_{\bm k}\exp(2\pi\mathrm i\langle\bm k,\cdot\rangle)$
    \begin{equation*}
        \frac{\partial^{s_j}}{\partial x_j^{s_j}} f
        = \sum_{\bm k\in\mathds Z^d} (2\pi\mathrm i k_j)^{s_j} \hat f_{\bm k} \exp(2\pi\mathrm i\langle\bm k,\cdot\rangle) \,.
    \end{equation*}
    By Parseval's identity it is immediate that $f\in H^{s_1,\dots,s_d}$, given the stated Fourier coefficient decay.
    For the reverse direction we have
    \begin{align*}
        \|f\|_{H^{s_1,\dots,s_d}}^2
        &= \sum_{\bm k\in\mathds Z^d} \max \{1, |k_1|^{2s_1}, \dots, |k_d|^{2s_d}\} | \hat f_{\bm k} |^2 \\
        &\le \sum_{\bm k\in\mathds Z^d} (1 + |k_1|^{2s_1} + |k_d|^{2s_d}) | \hat f_{\bm k} |^2 \\
        &= \sum_{\bm k\in\mathds Z^d} | \hat f_{\bm k} |^2 + \sum_{j=1}^{d} \sum_{\bm k\in\mathds Z^d} |k_j|^{2 s_j} | \hat f_{\bm k} |^2 \,,
    \end{align*}
    where all sums are finite due to $f\in H^{s_1,\dots,s_d}$ and Parseval's identity.
\end{proof} 

Note, with similar arguments we have that $f\in H^{s_1,\dots,s_d}$ automatically implies the presumably stronger condition $\partial^{\|\bm\alpha\|_1}/(\partial x_1^{\alpha_1} \dots \partial x_d^{\alpha_d}) f \in L_2$ for all $\bm\alpha = [\alpha_1,\dots,\alpha_d] \in \mathds N_0^d$ such that $\alpha_1/s_1+\dots+\alpha_d/s_d \le 1$.

From the characterization in terms of the decay of the Fourier coefficients, we immediately obtain the generalization of the anisotropic Sobolev spaces to non-integer smoothness  by using the norm \eqref{eq:anisotropicnorm}.
Note that because of the equivalence of $\ell_p$ (quasi)-norms, one could do the same for $\ell_p$-balls, cf.\ \cite[Section~3.6.1]{Barteldiss}.

Knowing the decay in the Fourier coefficients, we are able to investigate how the truncated Fourier sum behaves.

\begin{lemma}\label{boxprojection} Let $\bm m = (m_1, \dots, m_d)\in(2\mathds N)^d$ be a bandwidth vector and $s_1,\dots,s_d>0$ smoothness parameters.
    When projecting functions from anisotropic Sobolev spaces $H^{s_1,\dots,s_d}$ to nonempty frequency boxes
    \begin{equation*}
        \mathcal I_{\bm m}
        \coloneqq \bigtimes_{j=1}^{d}  [-m_j/2, m_j/2) \cap \mathds Z
    \end{equation*}
    we obtain
    \begin{equation*}
        \sup_{\|f\|_{H^{s_1, \dots, s_d}}\le 1} \|f-P_{\mathcal I_{\bm m}}f\|_{L_2}^{2}
        = \max \Big\{\Big(\frac{m_1}{2}\Big)^{-2s_1}, \dots, \Big(\frac{m_d}{2}\Big)^{-2s_d}\Big\} \,.
    \end{equation*}
\end{lemma}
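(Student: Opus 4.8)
The plan is to reduce everything to a weighted $\ell_2$ optimization on the Fourier side. By Parseval's identity the truncation error is $\|f - P_{\mathcal I_{\bm m}} f\|_{L_2}^2 = \sum_{\bm k \in \mathds Z^d \setminus \mathcal I_{\bm m}} |\hat f_{\bm k}|^2$, while the (extended, possibly non‑integer) anisotropic Sobolev norm from \Cref{lemma:Fouriercharacter} reads $\|f\|_{H^{s_1,\dots,s_d}}^2 = \sum_{\bm k \in \mathds Z^d} w(\bm k)\,|\hat f_{\bm k}|^2$ with the weight $w(\bm k) := \max\{1, |k_1|^{2s_1}, \dots, |k_d|^{2s_d}\}$. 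So the task becomes: maximize $\sum_{\bm k \notin \mathcal I_{\bm m}} |\hat f_{\bm k}|^2$ subject to $\sum_{\bm k} w(\bm k)\,|\hat f_{\bm k}|^2 \le 1$, which is controlled entirely by $\min_{\bm k \notin \mathcal I_{\bm m}} w(\bm k)$. I write $M := \max_{j\in[d]} (m_j/2)^{-2s_j}$, equivalently $1/M = \min_{j\in[d]} (m_j/2)^{2s_j}$.

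For the upper bound, note that since each $m_j/2$ is a positive integer, a frequency $\bm k$ lies outside the half‑open box $\mathcal I_{\bm m}$ exactly when $|k_j| \ge m_j/2$ for at least one coordinate $j$. For such $\bm k$ one gets $w(\bm k) \ge |k_j|^{2s_j} \ge (m_j/2)^{2s_j} \ge \min_{\ell}(m_\ell/2)^{2s_\ell} = 1/M$, hence $1 \le M\,w(\bm k)$. Therefore
\[
\|f - P_{\mathcal I_{\bm m}} f\|_{L_2}^2 = \sum_{\bm k \notin \mathcal I_{\bm m}} |\hat f_{\bm k}|^2 \le M \sum_{\bm k \notin \mathcal I_{\bm m}} w(\bm k)\,|\hat f_{\bm k}|^2 \le M\,\|f\|_{H^{s_1,\dots,s_d}}^2 \le M,
\]
so that $\sup_{\|f\|_{H^{s_1,\dots,s_d}}\le 1}\|f - P_{\mathcal I_{\bm m}}f\|_{L_2}^2 \le M$.

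For the matching lower bound I would exhibit an explicit extremizer. Pick an index $j^\ast$ attaining $M = (m_{j^\ast}/2)^{-2s_{j^\ast}}$ and let $\bm k^\ast$ be the vector with $(\bm k^\ast)_{j^\ast} = m_{j^\ast}/2$ and all other entries zero; then $\bm k^\ast \notin \mathcal I_{\bm m}$, and since $m_{j^\ast}/2 \ge 1$ its weight is $w(\bm k^\ast) = \max\{1,(m_{j^\ast}/2)^{2s_{j^\ast}}\} = (m_{j^\ast}/2)^{2s_{j^\ast}} = 1/M$. Taking $f^\ast := \sqrt{M}\,\exp(2\pi\mathrm i\langle\bm k^\ast,\cdot\rangle)$ gives $\|f^\ast\|_{H^{s_1,\dots,s_d}}^2 = M\,w(\bm k^\ast) = 1$, while $P_{\mathcal I_{\bm m}} f^\ast = 0$ because $\bm k^\ast$ is not in the index set, so $\|f^\ast - P_{\mathcal I_{\bm m}}f^\ast\|_{L_2}^2 = \|f^\ast\|_{L_2}^2 = M$. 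Hence the supremum equals $M$ and is in fact attained.

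There is no genuine obstacle here; the only points requiring a little care are: (i) using $m_j/2 \in \mathds N$ so that ``outside the half‑open box'' is exactly ``$|k_j| \ge m_j/2$ for some $j$'' and the extremal frequency $\bm k^\ast$ sits right on the boundary; (ii) observing $m_{j^\ast}/2 \ge 1$ so the constant $1$ in the maximum defining $w$ is inactive at $\bm k^\ast$; and (iii) interpreting $H^{s_1,\dots,s_d}$ for non‑integer $s_j$ directly through the norm \eqref{eq:anisotropicnorm}, as the excerpt already does.
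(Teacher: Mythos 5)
Your proof is correct and takes essentially the same route as the paper's: Parseval plus the minimal value of the weight $w(\bm k)$ outside the box for the upper bound, and a single normalized trigonometric monomial sitting at a boundary frequency as the fooling function for the lower bound (your write-up is in fact a bit more explicit than the paper's about which frequency attains the minimum). One tiny quibble that does not affect the argument: ``$\bm k$ lies outside the box exactly when $|k_j|\ge m_j/2$ for some $j$'' fails for $k_j=-m_j/2$, which is inside the half-open interval, but only the implication you actually use (outside $\Rightarrow$ $|k_j|\ge m_j/2$ for some $j$) is needed, and that one holds.
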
 

\begin{proof} For the upper bound, we use
    \begin{align*}
        \|f-P_{\mathcal I_{\bm m}}f\|_{L_2}^{2}
        &= \sum_{\bm k\notin \mathcal I_{\bm m}} |\hat f_{\bm k}|^2
        = \sum_{\bm k\notin \mathcal I_{\bm m}} (\max\{1,k_1^{s_1}, \dots, k_d^{s_d}\})^{-2} |\max\{1,k_1^{s_1}, \dots, k_d^{s_d}\}\hat f_{\bm k}|^2 \\
        &\le \|f\|_{H^{s_1, \dots, s_d}}^2 \sup_{\bm k\notin \mathcal I_{\bm m}} (\max\{k_1^{s_1}, \dots, k_d^{s_d}\})^{-2} \\
        &= \|f\|_{H^{s_1, \dots, s_d}}^2 \Big(\inf_{\bm k\notin \mathcal I_{\bm m}} \max\{k_1^{s_1}, \dots, k_d^{s_d}\}\Big)^{-2} \,,
    \end{align*}
    which evaluates to the assertion due to the definition of $\mathcal I_{\bm m}$.

    For the lower bound, we construct a fooling function consisting of a trigonometric monomial
    \begin{equation*}
        g = \max\{\ell_1^{-s_1}, \dots, \ell_d^{-s_d}\}\exp(2\pi\mathrm \langle\bm \ell,\cdot\rangle)
        \quad\text{for}\quad
        \bm \ell\in\Argmin_{\bm k\notin \mathcal I_{\bm m}}\{\min\{k_1^{s_1},\dots,k_d^{s_d}\}\} \,.
    \end{equation*}
    This function has an $H^{s_1,\dots,s_d}$ norm of one, and it holds
    \begin{equation*}
        \sup_{\|f\|_{H^{s_1, \dots, s_d}}\le 1} \|f-P_{\mathcal I_{\bm m}}f\|_{L_2}^{2}
        \ge \|g-P_{\mathcal I_{\bm m}}g\|_{L_2}^{2}
        = \sup_{\bm k\notin \mathcal I_{\bm m}} \max\{k_1^{-2s_1}, \dots, k_d^{-2s_d}\} \,. \qedhere
    \end{equation*}
\end{proof} 

\Cref{boxprojection} shows the advantage of using frequency boxes instead of cubes when approximating in anisotropic Sobolev spaces.
When we have a frequency budget of $|\mathcal I_{\bm m}| = m\in\mathds N$ approximating with frequency cubes with side length $m_j = \sqrt[d]{m}$ for $j=1,\dots,d$ yields
\begin{equation*}
    \sup_{\|f\|_{H^{s_1,\dots,s_d}}\le 1} \|f-P_{\mathcal I_{\bm m}} f\|_{L_2}^{2}
    \sim m^{-2\min\{s_1, \dots, s_d\}/d} \,,
\end{equation*}
whereas the optimal box ratio $m_j = (m^{1/(1/s_1+\dots+1/s_d)})^{1/s_j}$ for $j=1,\dots,d$ gives
\begin{equation*}
    \sup_{\|f\|_{H^{s_1,\dots,s_d}}\le 1} \|f-P_{\mathcal I_{\bm m}} f\|_{L_2}^{2}
    \sim m^{-2/(1/s_1+\dots+1/s_d)} \,.
\end{equation*}
For $d=2$, $s_1 = 1$, and $s_2 = 3$, this would make a difference of $m^{-1}$ in contrast to $m^{-3/2}$ for the optimal box ratio, which is the core motivation for this paper.

\subsection{Fast cross-validation}\label{sec:fcv} 

The central question of this paper is to choose parameters such that the approximation has a small prediction error.
It is therefore crucial to have a fast and reliable estimator of the $L_2$ error.
A basic idea is to split the data into a training set and a validation set for estimating the error.
Doing this multiple times, we obtain a reasonable estimator for the $L_2$ error functional known as cross-validation score, which is widely used in learning, see e.g.,~\cite{TaWe96, BS02, MS00, Rosset09, DPR10}.
A special case is where the partitionings seclude single points, then the training sets become $\{(\bm x^1, y_1), \dots, (\bm x^{i-1}, y_{i-1}), (\bm x^{i+1}, y_{i+1}), \dots, (\bm x^n, y_n)\}\subseteq\mathds T^d\times\mathds C$ and the validation sets $\{(\bm x^i, y_i)\}\subseteq\mathds T^d\times\mathds C$.
This leads to the so-called \emph{leave-one-out cross-validation score}.

\begin{defi} Let $S_{\mathcal I}^{\bm X}\bm y\colon\mathds T^d\to\mathds C$ be an approximation based on the data samples $\{(\bm x^1, y_1), \dots, (\bm x^n, y_n)\} \subseteq \mathds T^d\times \mathds C$.
    Further, let $S_{\mathcal I}^{\bm X_{-i}}\bm y_{-i}\colon\mathds T^d\to\mathds C$ be the same method applied to the samples with the $i$-th sample omitted.
    The \emph{cross-validation score} is defined via
    \begin{align*}
        \CV(S_{\mathcal I}^{\bm X}\bm y)
        = \frac 1 n \sum_{i=1}^{n} \Big|\Big(S_{\mathcal I}^{\bm X_{-i}}{\bm y_{-i}}\Big)(\bm x^i)-y_i \Big|^2 \,.
    \end{align*}
\end{defi} 

This parameter choice strategy is used widely in practice, and theoretical validation for the least squares approximation was shown in \cite[Corollary~9.11]{Barteldiss}.

A drawback of the cross-validation score is the numerical complexity of having to compute the $n$ approximations $S_{\mathcal I}^{\bm X_{-i}}\bm y_{-i}$ for $i=1,\dots,n$.
To circumvent this, the \emph{approximated cross-validation score} of the least squares approximation $S_{\mathcal I}^{\bm X}\bm y$ was introduced in \cite{BHP20} via
\begin{equation}\label{eq:fcv}
    \FCV(S_{\mathcal I}^{\bm X}\bm y)
    = \frac 1n \sum_{i=1}^n \frac{|(S_{\mathcal I}^{\bm X}\bm y)(\bm x^i)-y_i|^2}{(1-|\mathcal I|/n)^2} \,.
\end{equation}
It was shown that this is the same as the actual cross-validation score $\CV(S_{\mathcal I}^{\bm X}\bm y)$ for exact quadrature points, cf.\ \cite{BHP20}.
If we do not have exact quadrature like with the scattered data setting assumed in this paper, it is still an excellent approximation and can be used instead, cf.\ \cite[Theorem~9.20]{Barteldiss}.
 \section{Learning anisotropy from the ANOVA approximation}\label{sec:la} 

In this section we propose a method to estimate the smoothness parameters of a function $f$ based on samples.
We model every ANOVA term coming from an anisotropic Sobolev space, i.e., $f_{\mathfrak u}\in H^{\bm s_{\mathfrak u}}$ with smoothness parameters $\bm s_{\mathfrak u} = [s_{\mathfrak u,j}]_{j\in \mathfrak u}$.
The natural choice of frequencies is then a union of frequency boxes $\mathcal I = \bigcup_{\mathfrak u\in U}\tilde{\mathcal I}_{\bm m_{\mathfrak u}}$ with $\tilde{\mathcal I}_{\bm m_{\mathfrak u}}$ from \eqref{eq:anovafreqs1}.
This makes the discussed fast Fourier methods in the software package \texttt{ANOVAapprox.jl} introduced in \Cref{sec:anova} applicable.
Further, the truncation error splits into its ANOVA components
\begin{equation*}
    \| f - P_{\mathcal I} f \|_{L_2}^2
    = \sum_{\mathfrak u\in U} \|f_{\mathfrak u} - P_{\tilde{\mathcal I}_{\bm m_{\mathfrak u}}} f\|_{L_2}^2 + \sum_{\mathfrak u\in \mathcal P([d])\setminus U} \|f_{\mathfrak u}\|_{L_2}^2 \,.
\end{equation*}
With a reasonable choice of $U$, the second sum becomes small.
We estimate the first sum by the worst-case error and obtain with \Cref{boxprojection}
\begin{equation}\label{eq:behavior}
    \| f - P_{\mathcal I} f \|_{L_2}^2
    \le \sum_{\mathfrak u\in U} 
        \max \Big\{\Big(\frac{m_{\mathfrak u,j}}{2}\Big)^{-2s_{\mathfrak u,j}}\Big\}_{j\in \mathfrak u} \|f_{\mathfrak u}\|_{H^{\bm s_{\mathfrak u}}}^2
    + \sum_{\mathfrak u\in \mathcal P([d])\setminus U} \|f_{\mathfrak u}\|_{L_2}^2\, .
\end{equation}

Our goal is to extract the smoothness parameters $s_{\mathfrak u,j}$ in order to adapt the bandwidth parameters $m_{\mathfrak u,j}$ defining $\mathcal I$ and controlling the approximation error.
Without loss of generality, we aim to estimate the smoothness parameter $s_{\mathfrak u,\mathfrak u_1}$ of the ANOVA term $\mathfrak u$.
In order to do so we use projections, where we vary the bandwidth in that specific dimension of that ANOVA term.
For small bandwidths $(m_{\mathfrak u,\mathfrak u_1}/2)^{s_{\mathfrak u,\mathfrak u_1}}\le \min \{(m_{\mathfrak u,j}/2)^{s_{\mathfrak u,j}}\}_{j\in \{\mathfrak u_2,\dots,\mathfrak u_{|\mathfrak u|}\}}$ the error is then dominated by that dimension $\mathfrak u_1$ and we have
\begin{equation}\label{eq:kittchen}
    \|f_{\mathfrak u}-P_{\tilde{\mathcal I}_{\bm m_{\mathfrak u}}} f\|_{L_2}^2
    \le \max \Big\{\Big(\frac{m_{\mathfrak u,j}}{2}\Big)^{-2s_{\mathfrak u,j}}\Big\}_{j\in \mathfrak u} \|f_{\mathfrak u}\|_{H^{\bm s_{\mathfrak u}}}^2
    = \Big(\frac{m_{\mathfrak u,\mathfrak u_1}}{2}\Big)^{-2s_{\mathfrak u,\mathfrak u_1}} \|f_{\mathfrak u}\|_{H^{\bm s_{\mathfrak u}}}^2 \,.
\end{equation}
For $(m_{\mathfrak u,\mathfrak u_1}/2)^{s_{\mathfrak u,\mathfrak u_1}}\ge \min \{(m_{\mathfrak u,j}/2)^{s_{\mathfrak u,j}}\}_{j\in \mathfrak u\setminus\{\mathfrak u_1\}}$ the error then flattens.
We use the first range in order to extract the decay $s_{\mathfrak u,\mathfrak u_1}$.

For now this uses the $L_2$-projection, which is not available to us.
With more information -- like Wavelet coefficients -- this was already investigated in \cite{SUV21}.
We have approximated Fourier coefficients from the least squares ANOVA approximation.
The error of the $L_2$-projection to a frequency set $\mathcal I_{(\bm m_{\mathfrak u})'}$ is equal to the $2$-norm of all Fourier coefficients of the tail outside $\mathcal I_{(\bm m_{\mathfrak u})'}$, i.e.
\begin{equation*}
    \|f-P_{\mathcal I_{(\bm m_{\mathfrak u})'} } f\|_{L_2}^2
    = \sum_{\bm k\notin \mathcal I_{(\bm m_{\mathfrak u})'} } |\hat f_{\bm k}|^2 \,.
\end{equation*}
The least squares ANOVA approximation \eqref{eq:lsqr} works with a finite frequency index set $\mathcal I_{\bm m_{\mathfrak u}}$ to begin with and gives only an estimate of the exact Fourier coefficients.
By taking the $2$-norm of the approximated Fourier coefficients in $\mathcal I_{\bm m_{\mathfrak u}}\setminus \mathcal I_{(\bm m_{\mathfrak u})'}$ this gives a reasonable estimate, as the following lemma shows.

\begin{lemma}\label{lemma:v1v2} Let $f\colon \mathds T^d\to\mathds C$ be a function and $g\in W$ an approximation thereof from a function space $W$.
    Further, let $W = V_1 \oplus V_2$ and $g = g_1 + g_2$ with $g_1\in V_1$ and $g_2\in V_2$.
    
    Then
    \begin{equation*}
        \|g_2\|_{L_2} - \|f-g\|_{L_2}
        \le \|f-P_{V_1} f\|_{L_2}
        \le \|g_2\|_{L_2} + \|f-g\|_{L_2}
    \end{equation*}
\end{lemma}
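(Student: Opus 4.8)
The plan is to regard $V_1$ and $V_2$ as \emph{orthogonal} subspaces of $L_2$, so that the direct sum $W = V_1 \oplus V_2$ is an orthogonal decomposition (this is exactly the situation in the application, where $V_1$ and $V_2$ are spanned by exponentials with disjoint frequency sets). Denote by $P_{V_1}$ and $P_{V_2}$ the $L_2$-orthogonal projections onto $V_1$ and $V_2$. Because of orthogonality, these satisfy $P_{V_1} g = g_1$, $P_{V_2} g = g_2$, $P_{V_2} P_{V_1} = 0$, both are norm-nonincreasing, and $P_{V_1} f$ is the best $L_2$-approximation of $f$ from $V_1$. The whole proof is then a combination of these three standard facts with the triangle inequality.

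For the upper bound I would use the best-approximation property: since $g_1 \in V_1$, $\|f - P_{V_1} f\|_{L_2} \le \|f - g_1\|_{L_2}$. Writing $f - g_1 = (f - g) + (g - g_1) = (f - g) + g_2$ and applying the triangle inequality gives $\|f - P_{V_1} f\|_{L_2} \le \|f - g\|_{L_2} + \|g_2\|_{L_2}$, which is the right-hand inequality.

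For the lower bound the key observation is $\|P_{V_2} f\|_{L_2} \le \|f - P_{V_1} f\|_{L_2}$: since $V_1 \perp V_2$ we have $P_{V_2}(f - P_{V_1} f) = P_{V_2} f - P_{V_2} P_{V_1} f = P_{V_2} f$, and a projection does not increase the $L_2$-norm. Now combine with $g_2 = P_{V_2} g$: $\|g_2\|_{L_2} = \|P_{V_2} f + P_{V_2}(g - f)\|_{L_2} \le \|P_{V_2} f\|_{L_2} + \|f - g\|_{L_2} \le \|f - P_{V_1} f\|_{L_2} + \|f - g\|_{L_2}$, and rearranging yields $\|g_2\|_{L_2} - \|f - g\|_{L_2} \le \|f - P_{V_1} f\|_{L_2}$, the left-hand inequality.

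There is no real obstacle here; the only point requiring a bit of care is making the identifications $P_{V_1} g = g_1$, $P_{V_2} g = g_2$, and $P_{V_2} P_{V_1} = 0$ precise, which rest entirely on the orthogonality of $V_1$ and $V_2$ (hence on reading $\oplus$ as an orthogonal direct sum). Everything else is the triangle inequality together with the best-approximation and norm-contraction properties of orthogonal projections on a Hilbert space.
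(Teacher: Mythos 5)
Your proof is correct and follows essentially the same route as the paper's: the upper bound via the best-approximation property of $P_{V_1}$ applied to $g_1\in V_1$ and the triangle inequality, the lower bound via $\|g_2\|_{L_2}\le\|P_{V_2}(g-f)\|_{L_2}+\|P_{V_2}f\|_{L_2}$ together with the contraction property of orthogonal projections. The only difference is that you make explicit the orthogonality assumptions ($P_{V_1}g=g_1$, $P_{V_2}g=g_2$, $P_{V_2}P_{V_1}=0$) that the paper leaves implicit, which is a reasonable clarification rather than a change of argument.
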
 

\begin{proof} We obtain the left-hand inequality using
    \begin{equation*}
        \|g_2\|_{L_2}
        \le \|g_2 - P_{V_2} f\|_{L_2} + \|P_{V_2} f\|_{L_2}
        \le \|g - f\|_{L_2} + \|f-P_{V_1} f\|_{L_2} \,.
    \end{equation*}
    The right-hand inequality follows from
    \begin{equation*}
        \|f-P_{V_1} f\|_{L_2}
        \le \|f-g_1\|_{L_2}
        = \|f-g+g_2\|_{L_2}
        \le \|f-g\|_{L_2} + \|g_2\|_{L_2} \,. \qedhere
    \end{equation*}
\end{proof} 

We apply this by choosing $g = S_{\mathcal I}^{\bm X}\bm y$ the least squares ANOVA approximation.
For extracting the smoothness in dimension $j$ of the ANOVA term in dimensions $\mathfrak v$ we use the decomposition $g_1 = P_{\mathcal I'(\mathfrak v,j,m)} S_{\mathcal I}^{\bm X}\bm y$ and $g_2 = P_{\mathcal I\setminus\mathcal I'(\mathfrak v,j,m)} S_{\mathcal I}^{\bm X}\bm y$ with
\begin{equation}\label{eq:Ip}
    \mathcal I'(\mathfrak v,j,m')
    \coloneqq \tilde{\mathcal I}_{(m_{\mathfrak v,1}, \dots, m_{\mathfrak v,j-1}, m', m_{\mathfrak v,j+1}, \dots, m_{\mathfrak v,d})} \cup \bigcup_{\mathfrak u\in U\setminus\{v\}} \tilde{\mathcal I}_{\bm m_{\mathfrak u}} \,.
\end{equation}
This yields a vector
\begin{equation*}
    \Big[ \|P_{\mathcal I\setminus\mathcal I'(v,j,m')} S_{\mathcal I}^{\bm X}\bm y\|_{L_2}^2 \Big]_{m'\in\{0,\dots,m_{\mathfrak u,j}\}} \,,
\end{equation*}
which contains the sought smoothness decay, which eventually flattens for large $m'$, as explained in \eqref{eq:kittchen}.
In order to extract the smoothness information, we have to identify the $\bar m$ where the flattening begins in order to estimate the smoothness from the components $0,\dots,\bar m$.

With an initial guess, the frequency boxes are likely not optimal, and many of the exact Fourier coefficients have a smaller magnitude than the truncation error.
In the approximation this error spreads evenly among the coefficients of $S_{\mathcal I}^{\bm X}\bm y$.
In particular, the part of the function in $\Span\{\exp(2\pi\mathrm i\langle\bm k,\cdot\rangle)\}_{\bm k\in\mathcal I}$ will be reconstructed, and the remainder resembles the approximation of noise either from the measurement process or the truncation itself, for which the even spread is quantified in the following lemma:

\begin{lemma} Let $\mathcal I\subseteq\mathds Z^d$ be a frequency index set, $t>0$, $\bm X$ be i.i.d.\ uniformly random points with $|\bm X|\ge 10|\mathcal I|(\log|\mathcal I| + t)$ for the points $\bm X$, and $\bm\varepsilon\in\mathds C^n$ be i.i.d.\ mean-zero, random noise with variance $\sigma^2$.
    The expected magnitude of the approximated Fourier coefficients $S_{\mathcal I}^{\bm X}\bm\varepsilon = \sum_{\bm k\in\mathcal I} \hat g_{\bm k}\exp(2\pi\mathrm i\langle\bm k,\cdot\rangle)$ then equals
    \begin{equation*}
        \frac{2\sigma^2}{3n}
        \le
        \mathds E_{\bm\varepsilon}(|\hat g_{\bm k}|^2)
        \le
        \frac{2\sigma^2}{n} \,.
    \end{equation*}
    with probability $1-2\exp(-t)$ in the random choice of points.
\end{lemma}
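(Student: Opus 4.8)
The plan is to express each approximated coefficient $\hat g_{\bm k}$ as a linear functional of the noise vector $\bm\varepsilon$, compute its second moment exactly, and then sandwich the result using the spectral estimate for $\tfrac1n\bm L^\ast\bm L$ already recorded in \Cref{sec:anova}. First I would write the normal-equation solution $\bm{\hat g} = (\bm L^\ast\bm L)^{-1}\bm L^\ast\bm\varepsilon$, valid on the (high-probability) event that $\bm L$ has full column rank, and read off $\hat g_{\bm k} = \bm e_{\bm k}^\top\bm A\bm\varepsilon$ with $\bm A \coloneqq (\bm L^\ast\bm L)^{-1}\bm L^\ast$ and $\bm e_{\bm k}$ the $\bm k$-th unit vector. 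Expanding $|\hat g_{\bm k}|^2 = \sum_{i,j} A_{\bm k,i}\overline{A_{\bm k,j}}\,\varepsilon_i\overline{\varepsilon_j}$ and using that the $\varepsilon_i$ are independent and mean zero with $\mathds E|\varepsilon_i|^2 = \sigma^2$, so that $\mathds E(\varepsilon_i\overline{\varepsilon_j}) = \sigma^2\delta_{ij}$, all off-diagonal terms vanish and
\[
    \mathds E_{\bm\varepsilon}\big(|\hat g_{\bm k}|^2\big)
    = \sigma^2\sum_{i=1}^n |A_{\bm k,i}|^2
    = \sigma^2\,(\bm A\bm A^\ast)_{\bm k,\bm k}
    = \sigma^2\,\big((\bm L^\ast\bm L)^{-1}\big)_{\bm k,\bm k} \,,
\]
where the last equality uses the cancellation $\bm A\bm A^\ast = (\bm L^\ast\bm L)^{-1}\bm L^\ast\bm L(\bm L^\ast\bm L)^{-1} = (\bm L^\ast\bm L)^{-1}$. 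This reduces the statement to bounding the diagonal entries of $(\bm L^\ast\bm L)^{-1}$.

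For that I would invoke the oversampling assumption $n\ge 10|\mathcal I|(\log|\mathcal I|+t)$: the references \cite[Lemmata~6.2 and 6.4]{Barteldiss} in fact yield, with probability exceeding $1-2\exp(-t)$ over $\bm X$, the two-sided bound $\tfrac12\bm I\preceq\tfrac1n\bm L^\ast\bm L\preceq\tfrac32\bm I$ (the condition-number bound quoted in \Cref{sec:anova} is an immediate corollary). Consequently all eigenvalues of $(\bm L^\ast\bm L)^{-1} = \tfrac1n\big(\tfrac1n\bm L^\ast\bm L\big)^{-1}$ lie in $[\tfrac{2}{3n},\tfrac{2}{n}]$, and since $\big((\bm L^\ast\bm L)^{-1}\big)_{\bm k,\bm k} = \bm e_{\bm k}^\ast(\bm L^\ast\bm L)^{-1}\bm e_{\bm k}$ is a Rayleigh quotient of this Hermitian positive-definite matrix at a unit vector, it is trapped between its smallest and largest eigenvalue. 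Multiplying through by $\sigma^2$ yields the claimed two-sided bound on the stated event.

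The computation is essentially routine; the only points I would treat carefully are the second-moment bookkeeping for genuinely complex noise -- ensuring the cross terms really drop because $\mathds E(\varepsilon_i\overline{\varepsilon_j}) = 0$ for $i\ne j$ -- and the observation that one needs the full two-sided spectral estimate for $\tfrac1n\bm L^\ast\bm L$, not merely the condition-number bound, in order to pin down the correct $1/n$ scaling. I would therefore state that two-sided bound explicitly at the outset rather than rely on the condition-number formulation used earlier.
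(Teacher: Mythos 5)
Your proposal is correct and follows essentially the same route as the paper's proof: express $\bm{\hat g}=(\bm L^\ast\bm L)^{-1}\bm L^\ast\bm\varepsilon$, reduce $\mathds E_{\bm\varepsilon}(|\hat g_{\bm k}|^2)$ to $\sigma^2[(\bm L^\ast\bm L)^{-1}]_{\bm k,\bm k}$ via independence of the noise, and sandwich the diagonal entry as a Rayleigh quotient using the two-sided spectral bound $\tfrac12\bm I\preceq\tfrac1n\bm L^\ast\bm L\preceq\tfrac32\bm I$ from \cite[Lemmata~6.2 and 6.4]{Barteldiss}. Your explicit statement of that two-sided bound with the correct $n$-scaling is in fact cleaner than the paper's displayed version, which contains typographical slips ($\lambda_{\max}(\bm L^\ast\bm L)\le\tfrac{3}{2n}$ and $\tfrac{2n}{3}$ should read $\tfrac{3n}{2}$ and $\tfrac{2}{3n}$, respectively).
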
 

\begin{proof} Applying the least squares approximation \eqref{eq:lsqr} to i.i.d.\ noise $\bm\varepsilon$ with variance $\sigma ^2$ gives the approximated Fourier coefficients $ \bm{\hat g} = (\bm L^\ast\bm L)^{-1}\bm L^\ast\bm\varepsilon $.
    Thus,
    \begin{align*}
        \mathds E_{\bm\varepsilon}(|\hat g_k|^2)
        &= \mathds E_{\bm\varepsilon}\Big(\Big| \sum_{i=1}^{n} [(\bm L^\ast\bm L)^{-1}\bm L^\ast]_{i,k} \varepsilon_i \Big|^2\Big) \\
        &= \sum_{i=1}^{n} \sum_{j=1}^{n} [(\bm L^\ast\bm L)^{-1}\bm L^\ast]_{i,k} [\bm L(\bm L^\ast\bm L)^{-1}]_{k,j} \mathds E_{\bm\varepsilon}( \varepsilon_i \overline{\varepsilon_j}) \\
        &= \sigma^2 \sum_{i=1}^{n} [(\bm L^\ast\bm L)^{-1}\bm L^\ast]_{i,k} [\bm L(\bm L^\ast\bm L)^{-1}]_{k,i} \\
        &= \sigma^2 [(\bm L^\ast\bm L)^{-1}\bm L^\ast\bm L(\bm L^\ast\bm L)^{-1}]_{k,k} \\
        &= \sigma^2 [(\bm L^\ast\bm L)^{-1}]_{k,k} \,.
    \end{align*}

    To estimate the diagonal entries $[(\bm L^\ast\bm L)^{-1}]_{k,k}$ we use \cite[Lemmata~6.2 and 6.4]{Barteldiss}, which gives
    \begin{equation*}
        \frac{n}{2}
        \le
        \lambda_{\min}( \bm L^\ast\bm L )
        \le \lambda_{\max}( \bm L^\ast\bm L )
        \le \frac{3}{2n} \,,
    \end{equation*}
    with the stated probability $1-2\exp(-t)$.
    This is equivalent to the Rayleigh--Ritz quotient satisfying
    \begin{equation*}
        \frac{2n}{3}
        \le
        \frac{ \bm x^\ast (\bm L^\ast\bm L)^{-1} \bm x}{\bm x^\ast\bm x}
        \le
        \frac{2}{n}
        \quad\text{for all}\quad
        \bm x\in\mathds C^{|\mathcal I|} \,.
    \end{equation*}
    In particular, we have for $\bm x = \bm e_k$
    \begin{equation*}
        \frac{2n}{3}
        \le
        [\bm L^\ast\bm L]_{k,k}^{-1}
        \le
        \frac{2}{n} \,. \qedhere
    \end{equation*}
\end{proof} 

Thus, the flat plane corresponds to the most common value $c$ in the magnitude of all approximated Fourier coefficients.
For each $\mathfrak u$ and $j\in[|\mathfrak u|]$, we set $\bar m$ the largest $m$ such that
\begin{equation*}
    \Big[\|P_{\mathcal I\setminus\mathcal I'(\mathfrak v,j\mathfrak ,m)} S_{\mathcal I}^{\bm X}\bm y\|_{L_2}^2 \Big]_m
    > c^2 |\mathcal I\setminus\mathcal I'(\mathfrak v,j\mathfrak ,m)| \,.
\end{equation*}

It remains to estimate the smoothness from $[\|P_{\mathcal I\setminus\mathcal I'(\mathfrak v,j,m)} S_{\mathcal I}^{\bm X}\bm y\|_{L_2}^2 ]_{m\in\{0,\dots,\bar m\}}$.
For that we use weighted linear least squares in the log-log scale.

\begin{theorem}\label{learrates} Let $C_1,C_2,s>0$ and $C_1i^{-2s}\le y_i \le C_2i^{-2s}$ for $i=1,\dots,n$ modeling the error being in a tube with slope $-2s$.
    Applying weighted linear least squares in the log-log scale with weights $\omega_i = 1/(H_n i)$, where $H_n$ is the $n$-th harmonic number and points $x_i = \log i$ yields the approximated decay behavior $D i^{-2t}$ with
    \begin{equation*}
        D = \exp\Big(\frac{(\sum_{i=1}^{n}\omega_i \log^2(i))(\sum_{i=1}^{n} \omega_i \log(y_i)) - (\sum_{i=1}^{n} \omega_i \log(i) \log(y_i))(\sum_{i=1}^{n}\omega_i \log(i))}{(\sum_{i=1}^{n}\omega_i \log^2(i))-(\sum_{i=1}^{n}\omega_i \log(i))^2}\Big) \\
    \end{equation*}
    and
    \begin{equation*}
        t = -\frac 12 \frac{(\sum_{i=1}^{n}\omega_i \log(i) \log(y_i)) - (\sum_{i=1}^{n} \omega_i \log(i))(\sum_{i=1}^{n} \omega_i \log(y_i))}{(\sum_{i=1}^{n}\omega_i \log^2(i)) - (\sum_{i=1}^{n}\omega_i \log(i))^2} \,.
    \end{equation*}
    If $n\ge 3$, the error for the smoothness parameters is bounded by
    \begin{equation*}
        |t-s| \,\le\, \frac{4\log(C_2/C_1)}{\log n}
    \end{equation*}
    and
    \begin{equation*}
        \log C_1 - 4\log\Big(\frac{C_2}{C_1}\Big)
        \;\le\; \log(D)
        \;\le\; \log C_2 + 4\log\Big(\frac{C_2}{C_1}\Big) \,.
    \end{equation*}
\end{theorem}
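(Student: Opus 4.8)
The plan is to read the weighted least squares fit as an orthogonal projection, reduce everything to two scalar estimates about the abscissae $\log i$, and then do the elementary but delicate number‑theoretic bookkeeping. First observe that the weights form a probability vector, $\sum_{i=1}^n\omega_i=\tfrac1{H_n}\sum_{i=1}^n\tfrac1i=1$, so $p\mapsto\bar p\coloneqq\sum_i\omega_ip_i$ and $(p,q)\mapsto\sum_i\omega_ip_iq_i$ behave like an expectation and an inner product; I will use weighted means, the weighted variance $V_n\coloneqq\sum_i\omega_i(\log i-\overline{\log i})^2=\sum_i\omega_i\log^2i-(\overline{\log i})^2$ of the points $\log i$, Cauchy--Schwarz, and the bound $\sum_i\omega_i|\log i-\overline{\log i}|\le\sqrt{V_n}$. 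A one‑line computation confirms that the displayed $D$ and $t$ are exactly the weighted least squares intercept $\log D$ and $-\tfrac12\times(\text{slope})$ for the linear model $\log y_i\approx\log D-2t\log i$; from here on I work with that model.

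Writing $\log y_i=\log c_i-2s\log i$ with $\log c_i\coloneqq\log y_i+2s\log i\in[\log C_1,\log C_2]$, the fitted line is the $\omega$‑orthogonal projection of $(\log y_i)_i$ onto $\Span\{(1)_i,(\log i)_i\}$. Since $-2s(\log i)_i$ already lies in that span, linearity of the projection shows the fit equals (the projection of $(\log c_i)_i$) minus $2s(\log i)_i$. Writing the former as $\alpha+\beta\log i$, comparison of coefficients gives $-2t=\beta-2s$ and $\log D=\alpha$, and since the $(\log c_i)$‑regression passes through the weighted centroid $(\overline{\log i},\overline{\log c})$ also $\alpha=\overline{\log c}-\beta\,\overline{\log i}$; thus $t=s-\beta/2$. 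Moreover $\overline{\log c}=\sum_i\omega_i\log c_i$ is a convex combination of numbers in $[\log C_1,\log C_2]$, hence lies in that interval. So the whole statement reduces to bounding $\beta$.

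Centring $\log c_i$ at the tube midpoint $m_0=\tfrac12(\log C_1+\log C_2)$ is harmless because $\sum_i\omega_i(\log i-\overline{\log i})=0$, and gives $|\log c_i-m_0|\le\tfrac12\log(C_2/C_1)$; Cauchy--Schwarz applied to the numerator of $\beta=\bigl(\sum_i\omega_i(\log i-\overline{\log i})(\log c_i-m_0)\bigr)/V_n$ then yields $|\beta|\le\tfrac12\log(C_2/C_1)/\sqrt{V_n}$. Consequently
\[
  |t-s|=\tfrac12|\beta|\le\frac{\log(C_2/C_1)}{4\sqrt{V_n}}\,,
  \qquad
  |\log D-\overline{\log c}|=|\beta|\,\overline{\log i}\le\frac{\log(C_2/C_1)\,\overline{\log i}}{2\sqrt{V_n}}\,.
\]
Hence the theorem follows from the two elementary estimates, valid for all $n\ge3$,
\[
  \overline{\log i}=\frac1{H_n}\sum_{i=1}^n\frac{\log i}{i}\le\frac{\log n}{2}
  \qquad\text{and}\qquad
  \sqrt{V_n}\ge\frac{\log n}{16}\,,
\]
which turn the two displays into $|t-s|\le4\log(C_2/C_1)/\log n$ and $|\log D-\overline{\log c}|\le4\log(C_2/C_1)$; together with $\overline{\log c}\in[\log C_1,\log C_2]$ the latter is exactly the claimed bound on $\log D$.

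I expect these two estimates to carry all the real work. The first I would get by splitting $\sum_{i=1}^n\tfrac{\log i}{i}$ at $i=3$ (beyond which $x\mapsto\tfrac{\log x}{x}$ is decreasing), comparing the tail to $\int\tfrac{\log x}{x}\,\mathrm dx=\tfrac12\log^2x$, and using $H_n>\log n+\gamma$ to absorb the $O(1)$ leftover for $n\ge3$. The variance bound is the main obstacle: the naive estimate $V_n\ge\tfrac{\omega_1}{2}\sum_i\omega_i\log^2i$ carries a spurious factor $1/H_n\sim1/\log n$ and is too weak for large $n$, so instead I would keep in the sum defining $V_n$ only the indices $i\ge n^{3/4}$; there $\log i\ge\tfrac34\log n$, hence $\log i-\overline{\log i}\ge\tfrac14\log n$ by the first estimate, so $V_n\ge\tfrac1{16}\log^2n\cdot w$ with $w=\sum_{i\ge n^{3/4}}\omega_i=(H_n-H_{\lceil n^{3/4}\rceil-1})/H_n$. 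A logarithmic lower bound on $H_n-H_{\lceil n^{3/4}\rceil-1}$ together with $H_n<\log n+1$ shows $w\ge\tfrac1{16}$ once $n$ exceeds a modest absolute threshold, giving $\sqrt{V_n}\ge\log n/16$; the finitely many remaining small $n$ are checked by direct evaluation. Organising these sum and integral estimates so that the constants are uniform over $n\ge3$ is the only genuine difficulty; the rest is the short projection‑and‑Cauchy--Schwarz argument above.
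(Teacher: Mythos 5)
Your proposal is correct, but it reaches the bounds by a genuinely different route than the paper. The paper works directly with the closed-form expressions for $t$ and $\log D$: it substitutes the tube bounds $C_1 i^{-2s}\le y_i\le C_2 i^{-2s}$ term by term into the numerators, which collapses everything to the two ratios $\bigl(\sum_i\omega_i\log i\bigr)/V_n$ and $\bigl(\sum_i\omega_i\log i\bigr)^2/V_n$ (in your notation $\overline{\log i}/V_n$ and $(\overline{\log i})^2/V_n$), and then bounds these by $7/\log n$ and $4$ respectively via the integral comparisons \eqref{eq:overkill1}--\eqref{eq:overkill2} and $\log n\le H_n$. You instead read the fit as an $\omega$-orthogonal projection, use linearity to subtract the exact trend $-2s\log i$ so that only the regression slope $\beta$ of the bounded residuals $\log c_i\in[\log C_1,\log C_2]$ remains, and control $\beta$ by Cauchy--Schwarz after centring at the tube midpoint. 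This buys you a structurally cleaner argument and an asymptotically sharper constant (your $|t-s|\le\log(C_2/C_1)/(4\sqrt{V_n})$ behaves like $0.87\log(C_2/C_1)/\log n$ versus the paper's $3.5\log(C_2/C_1)/\log n$), at the price of needing a genuine lower bound on the weighted variance, $\sqrt{V_n}\ge\log n/16$, which the paper's route avoids because its two ratios only require the same upper integral estimates in numerator and denominator. Both of your reduced estimates are true with room to spare ($\overline{\log i}\le\tfrac12\log n$ follows exactly as you sketch, and $V_n\sim\log^2 n/12$ so the constant $1/16$ is generous), and your tail-mass argument for the variance is workable; you leave these at sketch level, but that is comparable to the paper's own ``follows from simple analysis of the expression at hand.'' If you write it up, the variance lower bound for all $n\ge3$ (threshold plus finite check) is the one place where you must actually supply the uniform constants.
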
 

\begin{proof} The solution of the weighted least squares is derived by computing the roots of the linear least squares functional
    \begin{equation*}
        \sum_{i=1}^{n} \omega_i | \log(y_i) - \log(D i^{-2t})|^2
        = \sum_{i=1}^{n} \omega_i | \log(y_i) - \log(D) -2t\log(i))|^2
    \end{equation*}
    using basic linear algebra.
    
    In order to prove the error estimates on $t$ and $D$, we first note
    $\sum_{i=1}^{n}\omega_i\log^2(i) \ge (\sum_{i=1}^{n}\omega_i\log(i))^2$.
    Thus,
    \begin{align}
        &s-t
        = s + \frac 12 \frac{(\sum_{i=1}^{n}\omega_i \log(i) \log(y_i)) - (\sum_{i=1}^{n} \omega_i \log(i))(\sum_{i=1}^{n} \omega_i \log(y_i))}{(\sum_{i=1}^{n}\omega_i \log^2(i)) - (\sum_{i=1}^{n}\omega_i \log(i))^2}\nonumber \\
        &\le s + \frac 12 \frac{(\sum_{i=1}^{n}\omega_i \log(i) ( \log(C_2) - 2s\log i )) - (\sum_{i=1}^{n} \omega_i \log(i))(\sum_{i=1}^{n} \omega_i (\log(C_1) - 2s\log i)}{(\sum_{i=1}^{n}\omega_i \log^2(i)) - (\sum_{i=1}^{n}\omega_i \log(i))^2}\nonumber \\
        &= s + \frac 12
        \Big(
        \log\Big(\frac{C_2}{C_1}\Big)\frac{\sum_{i=1}^{n}\omega_i \log(i)}{(\sum_{i=1}^{n}\omega_i \log^2(i)) - (\sum_{i=1}^{n}\omega_i \log(i))^2}
        -2s\Big)\nonumber \\
        &= \frac{\log(C_2/C_1)}{2}
        \frac{\sum_{i=1}^{n}\omega_i \log(i)}{(\sum_{i=1}^{n}\omega_i \log^2(i)) - (\sum_{i=1}^{n}\omega_i \log(i))^2}\label{eq:alligatoah} \,.
    \end{align}
    We obtain the same estimate for $t-s$ analogously.
    In order to estimate the latter fraction, we first need to estimate the sums by integrals taking their monotonicity into account
    \begin{equation}\label{eq:overkill1}
        \sum_{i=1}^{n} \frac{\log i}{i}
        \,\le\, \frac{\log 2}{2} + \frac{\log 3}{3} + \int_{3}^{n}\frac{\log x}{x}\;\mathrm dx
        \,=\, \frac{\log 2}{2} + \frac{\log 3}{3} + \frac{\log^2 n}{2} - \frac{\log^2 3}{2}
    \end{equation}
    and
    \begin{equation}\label{eq:overkill2}
        \frac{\log^3 n}{3} - \frac{\log^3 8}{3}
\sum_{i=1}^{n} \frac{\log^2 i}{i}
        \;\ge\;
        \frac{\log^3 (n+1)}{3} - \frac{\log^3 8}{3}
        + \sum_{i=1}^{7}\frac{\log^2 (i)}{i} \,.
    \end{equation}
    Using \eqref{eq:overkill1}, \eqref{eq:overkill2}, and $\log n\le H_n$ in \eqref{eq:alligatoah} we obtain for $n\ge 3$
    \begin{align*}
        &\frac{\sum_{i=1}^{n}\omega_i \log(i)}{(\sum_{i=1}^{n}\omega_i \log^2(i)) - (\sum_{i=1}^{n}\omega_i \log(i))^2}
        = \frac{\sum_{i=1}^{n}\log(i)/i}{(\sum_{i=1}^{n} \log^2(i)/i) - \frac{1}{H_n}(\sum_{i=1}^{n} \log(i)/i)^2} \\
        &\le \frac{\frac{\log^2 n}{2} - \frac{\log^2 3}{2} + \frac{\log 2}{2} + \frac{\log 3}{3}}{\frac{\log^3 (n+1)}{3} - \frac{\log^3 8}{3} + \sum_{i=2}^{7}\frac{\log^2 (i)}{i} - \frac{1}{\log n}(\frac{\log^2 n}{2} - \frac{\log^2 3}{2} + \frac{\log 2}{2} + \frac{\log 3}{3})^2}
        \le \frac{7}{\log n} \,,
    \end{align*}
    where the last inequality follows from simple analysis of the expression at hand.

    For the upper bound on $\log(D)$ we use
    \begin{align*}
        \log D
        &\le \frac{(\sum_{i=1}^{n}\omega_i \log^2 (i))(\log C_2 \sum_{i=1}^{n}\omega_i-2s\sum_{i=1}^{n}\omega_i\log (i))}{(\sum_{i=1}^{n}\omega_i\log^2 (i))-(\sum_{i=1}^{n}\omega_i \log (i))^2} \\
        &\phantom\le - \frac{(\log C_1\sum_{i=1}^{n}\omega_i\log (i) - 2s\sum_{i=1}^{n}\omega_i\log^2 (i))(\sum_{i=1}^{n}\omega_i\log (i))}{(\sum_{i=1}^{n}\omega_i\log^2 (i))-(\sum_{i=1}^{n}\omega_i \log (i))^2} \\
        &= \frac{\log C_2 (\sum_{i=1}^{n}\omega_i\log^2 (i)) - \log C_1 (\sum_{i=1}^{n}\omega_i\log (i))^2}{(\sum_{i=1}^{n}\omega_i\log^2 (i))-(\sum_{i=1}^{n}\omega_i \log (i))^2} \\
        &= \log C_2 + \log\Big(\frac{C_2}{C_1}\Big) \frac{(\sum_{i=1}^{n}\omega_i \log (i))^2}{(\sum_{i=1}^{n}\omega_i\log^2 (i))-(\sum_{i=1}^{n}\omega_i \log (i))^2} \,.
    \end{align*}
    Analogously, we obtain for the lower bound
    \begin{equation*}
        \log D
        \ge
        \log C_1 + \log\Big(\frac{C_2}{C_1}\Big) \frac{(\sum_{i=1}^{n}\omega_i \log (i))^2}{(\sum_{i=1}^{n}\omega_i\log^2 (i))-(\sum_{i=1}^{n}\omega_i \log (i))^2} \,.
    \end{equation*}
    It remains to estimate the fraction for $n\ge 3$ by using \eqref{eq:overkill1}, \eqref{eq:overkill2}, and $\log n\le H_n$:
    \begin{align*}
        &\frac{(\sum_{i=1}^{n}\omega_i \log(i))^2}{(\sum_{i=1}^{n}\omega_i \log^2(i)) - (\sum_{i=1}^{n}\omega_i \log(i))^2}
        = \frac{(\sum_{i=1}^{n}\log(i)/i)^2}{H_n(\sum_{i=1}^{n} \log^2(i)/i) - (\sum_{i=1}^{n} \log(i)/i)^2} \\
        &\le \frac{(\frac{\log^2 n}{2} - \frac{\log^2 3}{2} + \frac{\log 2}{2} + \frac{\log 3}{3})^2}{\log n(\frac{\log^3 (n+1)}{3} - \frac{\log^3 8}{3} + \sum_{i=2}^{7}\frac{\log^2 (i)}{i}) - (\frac{\log^2 n}{2} - \frac{\log^2 3}{2} + \frac{\log 2}{2} + \frac{\log 3}{3})^2} 
        \le 4 \,,
    \end{align*}
    where the last inequality follows from simple analysis of the expression at hand.
\end{proof} 

We summarize our procedure in \Cref{algo:la}.

\noindent\begin{algorithm} \caption{\texttt{learning smoothness parameters}}\label{algo:la}
    \begin{tabularx}{\textwidth}{lp{140pt}X}
        \textbf{Input:} &
        $S_{\mathcal I}^{\bm X}\bm y$ & ANOVA approximation \\[2pt]
        \hline\\[-8pt]
        \textbf{Output:} &
        $J_{\mathfrak u}$ for $\mathfrak u\in U$ & sets of dimensions for which the smoothness estimation succeeded \\
        & $D_{\mathfrak u,j}$ and $s_{\mathfrak u,j}$ for $j\in\mathfrak u$, $\mathfrak u\in U$ & estimated smoothness parameters \\[2pt]\hline
    \end{tabularx}
    \begin{algorithmic}[1]
        \setstretch{1.1}
        \STATE{define $c$ to be the most common magnitude of the Fourier coefficients of $S_{\mathcal I}^{\bm X}\bm y$}
        \FOR{$\mathfrak u\in U$}
        \STATE{
        set $J_{\mathfrak u} \leftarrow \emptyset$
        }
        \FOR{$j\in\mathfrak u$}
        \STATE{
            find the largest $\bar m_{\mathfrak u,j}$ such that
            $ [\|P_{\mathcal I\setminus\mathcal I'(\mathfrak u,j\mathfrak ,m)} S_{\mathcal I}^{\bm X}\bm y\|_{L_2}^2 ]_m > c^2 |\mathcal I\setminus\mathcal I'(\mathfrak u,j\mathfrak ,m)| $
            for $m=0,\dots,\bar m$ with $\mathcal I'(\mathfrak u,j,m)$ as in \eqref{eq:Ip}
        }
        \IF{$\bar m_{\mathfrak u,j} \ge 3$}
        \STATE{
            define $\bm v_{\mathfrak u,j} \leftarrow [ \|P_{\mathcal I\setminus\mathcal I'(v,j,m')} S_{\mathcal I}^{\bm X}\bm y\|_{L_2}^2 ]_{m'\in\{0,\dots,\bar m_{\mathfrak u,j}\}}$
        }
        \STATE{
            compute $D_{\mathfrak u,j}$ and $s_{\mathfrak u,j}$ via weighted linear least squares in the log-log scale applied to $\bm v_{\mathfrak u,j}$ according to \Cref{learrates}
        }
        \STATE{
            set $J_{\mathfrak u}\leftarrow J_{\mathfrak u}\cup\{j\}$
        }
        \ENDIF
        \ENDFOR
        \ENDFOR
        \RETURN{$J_{\mathfrak u}$, $D_{\mathfrak u,j}$, and $s_{\mathfrak u,j}$}
    \end{algorithmic}
\end{algorithm}

 \section{Using anisotropy in ANOVA approximation}\label{sec:ua} 

In this section we use the estimated smoothness parameters $D_{\mathfrak u,j}$ and $s_{\mathfrak u,j}$ from \Cref{sec:la} in order to compute a new set of frequencies $\psi(m) = \mathcal I$, improving the approximation quality.
As it may happen that we are not able to detect the smoothness parameters for certain dimensions for a lack of available data, we define the set $J_{\mathfrak u}$, which collects all dimensions $j\in\mathfrak u$ for which the smoothness parameter estimation was successful, cf.~\Cref{algo:la}.

According to \cite[Theorem~1.1]{Bartel22}, given logarithmic oversampling $n\ge 10|\mathcal I|(\log|\mathcal I| + t)$, the error of the least squares ANOVA approximation $S_{\mathcal I}^{\bm X}\bm y$ is bounded by
\begin{equation}\label{eq:sap}
    \|f-S_{\mathcal I}^{\bm X}\bm y\|_{L_2}^{2}
    \lesssim \|f-P_{\mathcal I}\bm y\|_{L_2}^{2} + \sigma^2\frac{|\mathcal I|}{n} \,.
\end{equation}
with probability exceeding $1-3\exp(-t)$.
These two summands resemble
\begin{itemize}
    \item the truncation error behaving the same as the truncated Fourier sum $\|P_{\mathcal I} f - f\|_{L_2}$, which we already know from \eqref{eq:behavior}, and
    \item the error due to noise, which only depends increasingly on the number of frequencies and not their shape.
\end{itemize}
Finding a good frequency shape $\psi$ with a fixed frequency budget $m\in\mathds N$ for the least squares ANOVA approximation $S_{\psi(m)}^{\bm X}\bm y$ is therefore the same as finding good frequencies for the truncated Fourier sum $P_{\psi(m)}f$ of which we know the error behavior \eqref{eq:behavior}.
Thus, we are able to compute the optimal bandwidths by solving the optimization problem
\begin{equation}\label{eq:opti}
    \begin{aligned}
        \min_{m_{\mathfrak u,j}} \quad &\sum_{\mathfrak u\in U} \max_{j\in J_{\mathfrak u}} C_{\mathfrak u,j} (m_{\mathfrak u,j}-1)^{-2s_{\mathfrak u,j}} \\
        \mathrm{s.t.} \quad &\sum_{\mathfrak u\in U} \prod_{j=1}^{|\mathfrak u|} (m_{\mathfrak u,j}-1) = m-1 \,.
    \end{aligned}
\end{equation}

\begin{lemma}\label{optimalbandwidth} Let $d\in\mathds N$ be the dimension, $m\in\mathds N$ the frequency budget, $U\subseteq\mathcal P([d])$ the active ANOVA terms, and $J_{\mathfrak u} \subseteq [|\mathfrak u|]$ for $\mathfrak u\in U$ the ANOVA terms for which we have smoothness parameters.
    Further, let $C_{\mathfrak u,j}>0$, $s_{\mathfrak u,j}>0$ for $j\in J_{\mathfrak u}$, and $m_{\mathfrak u,j}>0$ for $j\in \mathfrak u\setminus J_{\mathfrak u}$.
    Then the solution of \eqref{eq:opti} is given by computing $\lambda>0$ such that the following monotone equation is fulfilled
    \begin{equation}\label{eq:optimallambda}
        \sum_{\mathfrak u\in U} B_{\mathfrak u}^{\frac{1}{1+A_{\mathfrak u}}} (\lambda A_{\mathfrak u})^{-\frac{A_{\mathfrak u}}{1+A_{\mathfrak u}}}
        = m-1 \,,
    \end{equation}
    with
    \begin{equation*}
        A_{\mathfrak u} \coloneqq \frac 12 \sum_{j\in J_{\mathfrak u}} \frac{1}{s_{\mathfrak u,j}}
        \quad\text{and}\quad
        B_{\mathfrak u} \coloneqq \prod_{j\in J_{\mathfrak u}} C_{\mathfrak u,j}^{\frac{1}{2s_{\mathfrak u,j}}} \prod_{j\in[|\mathfrak u|]\setminus J_{\mathfrak u}} m_{\mathfrak u,j}-1 \,.
    \end{equation*}
    Finally, we obtain the bandwidths optimizing \eqref{eq:opti} via
    \begin{equation*}
        m_{\mathfrak u,j} = \Big(\frac{C_{\mathfrak u,j}}{(\lambda B_{\mathfrak u} A_{\mathfrak u})^{\frac{1}{1+A_{\mathfrak u}}}}\Big)^{\frac{1}{2s_{\mathfrak u,j}}} + 1 \,.
    \end{equation*}
\end{lemma}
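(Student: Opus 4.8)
The plan is to decouple \eqref{eq:opti} into an inner, per-ANOVA-term balancing step and an outer, scalar convex problem solved by Lagrange multipliers, using the nested-minimisation identity: since the objective of \eqref{eq:opti} separates over $\mathfrak u\in U$ and the constraint couples the terms only through their total products $\prod_{j=1}^{|\mathfrak u|}(m_{\mathfrak u,j}-1)$, we may first minimise each inner maximum with the per-term product held fixed and then optimise over the allocation of the products.

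First I would fix, for each $\mathfrak u\in U$, the controllable part of its product $Q_{\mathfrak u}\coloneqq\prod_{j\in J_{\mathfrak u}}(m_{\mathfrak u,j}-1)$ (the remaining factor $\prod_{j\in[|\mathfrak u|]\setminus J_{\mathfrak u}}(m_{\mathfrak u,j}-1)$ being prescribed) and minimise only $\max_{j\in J_{\mathfrak u}}C_{\mathfrak u,j}(m_{\mathfrak u,j}-1)^{-2s_{\mathfrak u,j}}$. Writing $a_j=m_{\mathfrak u,j}-1$, define $E_{\mathfrak u}$ by the balanced choice $C_{\mathfrak u,j}a_j^{-2s_{\mathfrak u,j}}=E_{\mathfrak u}$, i.e.\ $a_j=(C_{\mathfrak u,j}/E_{\mathfrak u})^{1/(2s_{\mathfrak u,j})}$; then $\prod_{j\in J_{\mathfrak u}}a_j=E_{\mathfrak u}^{-A_{\mathfrak u}}\prod_{j\in J_{\mathfrak u}}C_{\mathfrak u,j}^{1/(2s_{\mathfrak u,j})}$, which determines $E_{\mathfrak u}$ uniquely once $Q_{\mathfrak u}$ is fixed. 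If some other feasible tuple had $\max_j C_{\mathfrak u,j}a_j^{-2s_{\mathfrak u,j}}<E_{\mathfrak u}$, then $a_j>(C_{\mathfrak u,j}/E_{\mathfrak u})^{1/(2s_{\mathfrak u,j})}$ for every $j\in J_{\mathfrak u}$, forcing $\prod_{j\in J_{\mathfrak u}}a_j>Q_{\mathfrak u}$, a contradiction; hence the balanced choice is a genuine minimiser (not merely a critical point of a non-smooth objective), and the per-term total product becomes $\prod_{j=1}^{|\mathfrak u|}(m_{\mathfrak u,j}-1)=E_{\mathfrak u}^{-A_{\mathfrak u}}B_{\mathfrak u}$ with $A_{\mathfrak u},B_{\mathfrak u}$ as in the statement.

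After this reduction \eqref{eq:opti} becomes the scalar problem $\min\sum_{\mathfrak u\in U}E_{\mathfrak u}$ subject to $\sum_{\mathfrak u\in U}E_{\mathfrak u}^{-A_{\mathfrak u}}B_{\mathfrak u}=m-1$, $E_{\mathfrak u}>0$; and any $E_{\mathfrak u}>0$ corresponds to an admissible $Q_{\mathfrak u}>0$, so the two problems are equivalent. Substituting $p_{\mathfrak u}=E_{\mathfrak u}^{-A_{\mathfrak u}}B_{\mathfrak u}$ rewrites the objective as $\sum_{\mathfrak u}B_{\mathfrak u}^{1/A_{\mathfrak u}}p_{\mathfrak u}^{-1/A_{\mathfrak u}}$, a sum of convex functions of $p_{\mathfrak u}>0$, under the affine constraint $\sum_{\mathfrak u}p_{\mathfrak u}=m-1$; this is a convex program, so stationarity of the Lagrangian is sufficient for the global minimum. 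Forming $\sum_{\mathfrak u}E_{\mathfrak u}+\lambda\bigl(\sum_{\mathfrak u}E_{\mathfrak u}^{-A_{\mathfrak u}}B_{\mathfrak u}-(m-1)\bigr)$ and setting $\partial_{E_{\mathfrak u}}=0$ yields $\lambda A_{\mathfrak u}E_{\mathfrak u}^{-A_{\mathfrak u}-1}B_{\mathfrak u}=1$, hence $E_{\mathfrak u}=(\lambda A_{\mathfrak u}B_{\mathfrak u})^{1/(1+A_{\mathfrak u})}$. Substituting into the constraint and simplifying exponents gives exactly \eqref{eq:optimallambda}; each summand $(\lambda A_{\mathfrak u})^{-A_{\mathfrak u}/(1+A_{\mathfrak u})}$ is strictly decreasing in $\lambda$ and sweeps $(0,\infty)$ as $\lambda$ runs over $(0,\infty)$, so a unique admissible $\lambda>0$ exists and is obtained from this monotone equation by, e.g., bisection. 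Finally, undoing the inner substitution $m_{\mathfrak u,j}-1=(C_{\mathfrak u,j}/E_{\mathfrak u})^{1/(2s_{\mathfrak u,j})}$ with $E_{\mathfrak u}=(\lambda A_{\mathfrak u}B_{\mathfrak u})^{1/(1+A_{\mathfrak u})}$ produces the claimed closed form for $m_{\mathfrak u,j}$.

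I expect the main obstacle to be not the algebra but the two legitimacy issues: justifying the interchange of the inner and outer minimisations — settled by the product–inequality argument above — and certifying that the Lagrange point is the global optimum — settled by the convexity of the reformulated problem in the variables $p_{\mathfrak u}$. A minor point to address is the degenerate case $J_{\mathfrak u}=\emptyset$, where that term contributes a fixed product and nothing to the objective and should simply be dropped from the optimisation; note also that $A_{\mathfrak u}>0$ precisely because $J_{\mathfrak u}\neq\emptyset$, which is what makes \eqref{eq:optimallambda} well posed.
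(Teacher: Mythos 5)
Your proposal is correct and follows essentially the same route as the paper's proof: equalize the terms inside each inner maximum, substitute to reduce to a scalar variable per ANOVA term, and solve the resulting constrained problem with a Lagrange multiplier, recovering \eqref{eq:optimallambda} and the closed form for $m_{\mathfrak u,j}$. The additional points you supply --- the product-inequality argument showing the balanced choice is a true minimiser, the convexity of the reformulation in $p_{\mathfrak u}$ certifying global optimality of the stationary point, and the existence/uniqueness of $\lambda$ --- are all details the paper's proof asserts or leaves implicit, so your version is, if anything, more complete.
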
 

\begin{proof} Note that the individual terms in the inner $\max$ of the optimization problem can be assumed equal, as otherwise there are bandwidths $m_{\mathfrak u,j}$ yielding a smaller target value whilst satisfying the constraint.
    With the substitution
    \begin{equation*}
        z_{\mathfrak u} = C_{\mathfrak u,j} (m_{\mathfrak u,j}-1)^{-2s_{\mathfrak u,j}}
        \quad\Leftrightarrow\quad
        m_{\mathfrak u,j}
        = \Big(\frac{z_{\mathfrak u}}{C_{\mathfrak u,j}}\Big)^{2s_{\mathfrak u,j}} + 1 \,,
    \end{equation*}
    the reduced optimization problem has the form
    \begin{align*}
        \min_{m_{\mathfrak u,j}} \quad &\sum_{\mathfrak u\in U} z_{\mathfrak u} \\
        \mathrm{s.t.} \quad &\sum_{\mathfrak u\in U} \prod_{j=1}^{|\mathfrak u|} \Big(\frac{z_{\mathfrak u}}{C_{\mathfrak u,j}}\Big)^{-\frac{1}{2s_{\mathfrak u,j}}}
        = \sum_{\mathfrak u\in U} B_{\mathfrak u} z_{\mathfrak u}^{-A_{\mathfrak u}}
        = m-1 \,.
    \end{align*}
    The solution is obtained by computing the roots of the Lagrangian
    \begin{flalign*}
        && \frac{\partial\mathcal L(z_{\mathfrak u},\lambda)}{\partial z_{\mathfrak u}}
        &= 1 - \lambda A_{\mathfrak u} B_{\mathfrak u} z_{\mathfrak u}^{-A_{\mathfrak u}-1}
        \overset != 0 && \\
        \Leftrightarrow &&
        z_{\mathfrak u} &= (\lambda A_{\mathfrak u} B_{\mathfrak u})^{\frac{1}{A_{\mathfrak u}+1}} \,. &&
    \end{flalign*}
    Plugging this into the constraint yields the defining equation for $\lambda$.
    With $\lambda$ computed, this gives $z_{\mathfrak u}$ and, then, $m_{\mathfrak u,j}$.
\end{proof} 

\begin{figure} \centering
    \begingroup
  \makeatletter
  \providecommand\color[2][]{\GenericError{(gnuplot) \space\space\space\@spaces}{Package color not loaded in conjunction with
      terminal option `colourtext'}{See the gnuplot documentation for explanation.}{Either use 'blacktext' in gnuplot or load the package
      color.sty in LaTeX.}\renewcommand\color[2][]{}}\providecommand\includegraphics[2][]{\GenericError{(gnuplot) \space\space\space\@spaces}{Package graphicx or graphics not loaded}{See the gnuplot documentation for explanation.}{The gnuplot epslatex terminal needs graphicx.sty or graphics.sty.}\renewcommand\includegraphics[2][]{}}\providecommand\rotatebox[2]{#2}\@ifundefined{ifGPcolor}{\newif\ifGPcolor
    \GPcolortrue
  }{}\@ifundefined{ifGPblacktext}{\newif\ifGPblacktext
    \GPblacktexttrue
  }{}\let\gplgaddtomacro\g@addto@macro
\gdef\gplbacktext{}\gdef\gplfronttext{}\makeatother
  \ifGPblacktext
\def\colorrgb#1{}\def\colorgray#1{}\else
\ifGPcolor
      \def\colorrgb#1{\color[rgb]{#1}}\def\colorgray#1{\color[gray]{#1}}\expandafter\def\csname LTw\endcsname{\color{white}}\expandafter\def\csname LTb\endcsname{\color{black}}\expandafter\def\csname LTa\endcsname{\color{black}}\expandafter\def\csname LT0\endcsname{\color[rgb]{1,0,0}}\expandafter\def\csname LT1\endcsname{\color[rgb]{0,1,0}}\expandafter\def\csname LT2\endcsname{\color[rgb]{0,0,1}}\expandafter\def\csname LT3\endcsname{\color[rgb]{1,0,1}}\expandafter\def\csname LT4\endcsname{\color[rgb]{0,1,1}}\expandafter\def\csname LT5\endcsname{\color[rgb]{1,1,0}}\expandafter\def\csname LT6\endcsname{\color[rgb]{0,0,0}}\expandafter\def\csname LT7\endcsname{\color[rgb]{1,0.3,0}}\expandafter\def\csname LT8\endcsname{\color[rgb]{0.5,0.5,0.5}}\else
\def\colorrgb#1{\color{black}}\def\colorgray#1{\color[gray]{#1}}\expandafter\def\csname LTw\endcsname{\color{white}}\expandafter\def\csname LTb\endcsname{\color{black}}\expandafter\def\csname LTa\endcsname{\color{black}}\expandafter\def\csname LT0\endcsname{\color{black}}\expandafter\def\csname LT1\endcsname{\color{black}}\expandafter\def\csname LT2\endcsname{\color{black}}\expandafter\def\csname LT3\endcsname{\color{black}}\expandafter\def\csname LT4\endcsname{\color{black}}\expandafter\def\csname LT5\endcsname{\color{black}}\expandafter\def\csname LT6\endcsname{\color{black}}\expandafter\def\csname LT7\endcsname{\color{black}}\expandafter\def\csname LT8\endcsname{\color{black}}\fi
  \fi
    \setlength{\unitlength}{0.0500bp}\ifx\gptboxheight\undefined \newlength{\gptboxheight}\newlength{\gptboxwidth}\newsavebox{\gptboxtext}\fi \setlength{\fboxrule}{0.5pt}\setlength{\fboxsep}{1pt}\definecolor{tbcol}{rgb}{1,1,1}\begin{picture}(4520.00,2540.00)\gplgaddtomacro\gplbacktext{\colorrgb{0.00,0.00,0.00}\put(994,575){\makebox(0,0)[r]{\strut{}\scriptsize $10^{-3}$}}\colorrgb{0.00,0.00,0.00}\put(994,1143){\makebox(0,0)[r]{\strut{}\scriptsize $10^{-2}$}}\colorrgb{0.00,0.00,0.00}\put(994,1711){\makebox(0,0)[r]{\strut{}\scriptsize $10^{-1}$}}\colorrgb{0.00,0.00,0.00}\put(994,2280){\makebox(0,0)[r]{\strut{}\scriptsize $10^{0}$}}\colorrgb{0.00,0.00,0.00}\put(1095,407){\makebox(0,0){\strut{}\scriptsize $10^{0}$}}\colorrgb{0.00,0.00,0.00}\put(2646,407){\makebox(0,0){\strut{}\scriptsize $10^{1}$}}\colorrgb{0.00,0.00,0.00}\put(4197,407){\makebox(0,0){\strut{}\scriptsize $10^{2}$}}}\gplgaddtomacro\gplfronttext{\csname LTb\endcsname \put(447,1427){\rotatebox{-270.00}{\makebox(0,0){\strut{}$L_2$ error}}}\csname LTb\endcsname \put(2646,167){\makebox(0,0){\strut{}$m$}}}\gplbacktext
    \put(0,0){\includegraphics[width={226.00bp},height={127.00bp}]{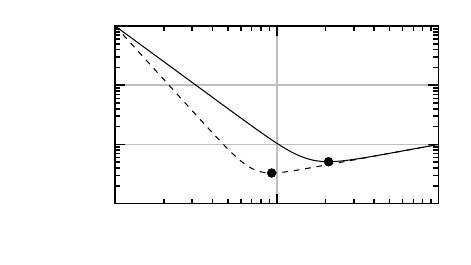}}\gplfronttext
  \end{picture}\endgroup
     \caption{Error behavior $\|S_{\Psi(m)}^{\bm X}\bm y - f\|_{L_2}^2 \lesssim m^{-2s_\Psi}+\sigma^2m/n$ for $s_\Psi=2$ (solid) and $s_\Psi=3$ (dashed) in the presence of noise.}
    \label{fig:error_behavior}
\end{figure} 

In order to implement \Cref{optimalbandwidth} we need to solve the nonlinear equation \eqref{eq:optimallambda}, which we do with bisection using the monotonicity.
Thus, having smoothness information and a frequency budget $m$, we are able to compute improved bandwidths.
In \Cref{sec:la} we covered how to estimate the smoothness, so it remains to choose the frequency budget.
For that we use the known error behavior from \eqref{eq:sap}.
Consequently, in the absence of noise, the frequency budget $|\mathcal I| = m$ should be chosen as large as possible while still satisfying the logarithmic oversampling condition.
When noise is present, one has to find $m$ such that over- and underfitting are balanced, i.e., the $L_2$ error is smallest.
Instead of minimizing the $L_2$ error, which is not available to use, we minimize the cross-validation score from \Cref{sec:fcv} in order to find the optimal frequency budget $m$.
The expected behavior and possible gain are depicted in \Cref{fig:error_behavior}.

 \section{Numerical results}\label{sec:numerics} 

In this section we test our approach with three different numerical examples.
For all of them we conduct two experiments.
\begin{itemize}
\item
    We sample the function exactly in $n=100\,000$ uniformly random points $\bm X$ and use a frequency budget $m$ such that we have logarithmic oversampling $m\log m = n$
    We initialize the smoothness parameters with $D_{\mathfrak u,j} = 1$ and $t_{\mathfrak u,j} = 1$ for all $j\in \mathfrak u$ and $\mathfrak u\in U$.
    This gives a frequency index set $\psi_1(m)$ for which we compute the first approximation $S_{\psi_1(m)}^{\bm X}\bm y$.
    From that approximation we estimate new smoothness parameters according to \Cref{algo:la}, which we use for a new frequency index set $\psi_{2}(m)$ and a new approximation $S_{\psi_2(m)}^{\bm X}\bm y$.
    We repeat this for $9$ iterations and approximate the $L_2$ error for every iteration using another set of $1\,000\,000$ uniformly random points.
\item
    In a second experiment we use noisy function values $\bm y = [ f(\bm x^i) + \varepsilon_i ]_{i=1}^{n}$ with Gaussian noise and a signal-to-noise ratio of
    \begin{equation*}
        \mathrm{SNR}_{\mathrm{dB}}
        = 10 \log_{10}\Big(\frac{\sum_{i=1}^{n}|f(\bm x^i)|^2}{\sum_{i=1}^{n}|\varepsilon_i|^2}\Big)
        = 50\,.
    \end{equation*}
    For the initial smoothness parameters $D_{\mathfrak u,j} = 1$ and $t_{\mathfrak u,j} = 1$ for all $j\in \mathfrak u$ and $\mathfrak u\in U$, we compute the fast cross-validation score, cf.~\Cref{sec:fcv}, and approximate the $L_2$ error for several values of $m\in\{300, \dots, 10\,000\}$.
    We choose $m$ such that it minimizes the fast cross-validation score $\FCV(S_{\mathcal I}^{\bm X}\bm y)$ defined in \eqref{eq:fcv} and estimate the smoothness parameters according to \Cref{algo:la}.
    We repeat this $3$ times.
\end{itemize}

Note that this setup gives plenty of output for academic evaluation.
For a practical implementation, the number of iterations from the first experiment could be reduced, and the cross-validation score of the second experiment would be used in conjunction with an optimization procedure to reduce the computation time further.
The corresponding code is integrated into the \texttt{ANOVAapprox.jl} software package.

\subsection{Example with complete ANOVA decomposition} 

The first example has spatial dimension $d=2$ with the function being
\begin{equation}\label{eq:2}
    f(\bm x) = \sqrt{\frac{378000}{2281}} \Big(p_2(x_1) + p_4(x_2) + p_4(x_1)p_2(x_2)\Big) \,,
\end{equation}
where the prefactor is such that $\|f\|_{L_2} = 1$ and $p_2$ and $p_4$ are the Bernoulli polynomials
\begin{equation*}
    p_2 = x^2-x+1/6
    \quad\text{and}\quad
    p_4 = x^4-2x^3+x^2-1/30 \,.
\end{equation*}
Their Fourier series is given by
\begin{equation*}
    p_n(x) = -\frac{n!}{(2\pi\mathrm i)^n}\sum_{k\neq 0} \frac{\exp(2\pi\mathrm i kx)}{k^n} \,.
\end{equation*}
Thus, $p_n$ has smoothness $s = n-1/2$.
With the zeroth Fourier coefficient zero, the ANOVA decomposition is immediately given by $f_{\{1\}}(x_1) = p_2(x_1)$, $f_{\{2\}}(x_2) = p_4(x_2)$, and $f_{\{1,2\}}(x_1,x_2) = p_4(x_1)p_2(x_2)$.
We use all ANOVA terms $\{1\}$, $\{2\}$, and $\{1,2\}$.

\begin{table}
    \centering
    \begin{tabular}{c|ccc}
        & $\mathfrak u = \{1\}$ & $\mathfrak u = \{2\}$ & $\mathfrak u = \{1,2\}$ \\
        \hline
        $j=1$ & $1.612$ ($1.5$) & & $3.958$ ($3.5$) \\
        $j=2$ & & $3.859$ ($3.5$) & $1.717$ ($1.5$)
    \end{tabular}
    \caption{Estimated rates $s_{\mathfrak u,j}$ for the $d=2$ example for every dimension of each ANOVA term $\mathfrak u$ with the analytical rates in brackets.}\label{s2_rates}
\end{table}

In the noiseless case, the estimated rates are close to the actual rates with overestimation throughout, cf.~\Cref{s2_rates}.
Notice that this example highlights that the ANOVA terms do not necessarily inherit smoothness among themselves but can behave entirely independently.
When it comes to the $L_2$ error in \Cref{s2_rates_l2error}, we see an improvement of a factor of $10$ with the first iteration, which does not change much in further iterations.
\begin{figure} \centering
    \begingroup
  \makeatletter
  \providecommand\color[2][]{\GenericError{(gnuplot) \space\space\space\@spaces}{Package color not loaded in conjunction with
      terminal option `colourtext'}{See the gnuplot documentation for explanation.}{Either use 'blacktext' in gnuplot or load the package
      color.sty in LaTeX.}\renewcommand\color[2][]{}}\providecommand\includegraphics[2][]{\GenericError{(gnuplot) \space\space\space\@spaces}{Package graphicx or graphics not loaded}{See the gnuplot documentation for explanation.}{The gnuplot epslatex terminal needs graphicx.sty or graphics.sty.}\renewcommand\includegraphics[2][]{}}\providecommand\rotatebox[2]{#2}\@ifundefined{ifGPcolor}{\newif\ifGPcolor
    \GPcolortrue
  }{}\@ifundefined{ifGPblacktext}{\newif\ifGPblacktext
    \GPblacktexttrue
  }{}\let\gplgaddtomacro\g@addto@macro
\gdef\gplbacktext{}\gdef\gplfronttext{}\makeatother
  \ifGPblacktext
\def\colorrgb#1{}\def\colorgray#1{}\else
\ifGPcolor
      \def\colorrgb#1{\color[rgb]{#1}}\def\colorgray#1{\color[gray]{#1}}\expandafter\def\csname LTw\endcsname{\color{white}}\expandafter\def\csname LTb\endcsname{\color{black}}\expandafter\def\csname LTa\endcsname{\color{black}}\expandafter\def\csname LT0\endcsname{\color[rgb]{1,0,0}}\expandafter\def\csname LT1\endcsname{\color[rgb]{0,1,0}}\expandafter\def\csname LT2\endcsname{\color[rgb]{0,0,1}}\expandafter\def\csname LT3\endcsname{\color[rgb]{1,0,1}}\expandafter\def\csname LT4\endcsname{\color[rgb]{0,1,1}}\expandafter\def\csname LT5\endcsname{\color[rgb]{1,1,0}}\expandafter\def\csname LT6\endcsname{\color[rgb]{0,0,0}}\expandafter\def\csname LT7\endcsname{\color[rgb]{1,0.3,0}}\expandafter\def\csname LT8\endcsname{\color[rgb]{0.5,0.5,0.5}}\else
\def\colorrgb#1{\color{black}}\def\colorgray#1{\color[gray]{#1}}\expandafter\def\csname LTw\endcsname{\color{white}}\expandafter\def\csname LTb\endcsname{\color{black}}\expandafter\def\csname LTa\endcsname{\color{black}}\expandafter\def\csname LT0\endcsname{\color{black}}\expandafter\def\csname LT1\endcsname{\color{black}}\expandafter\def\csname LT2\endcsname{\color{black}}\expandafter\def\csname LT3\endcsname{\color{black}}\expandafter\def\csname LT4\endcsname{\color{black}}\expandafter\def\csname LT5\endcsname{\color{black}}\expandafter\def\csname LT6\endcsname{\color{black}}\expandafter\def\csname LT7\endcsname{\color{black}}\expandafter\def\csname LT8\endcsname{\color{black}}\fi
  \fi
    \setlength{\unitlength}{0.0500bp}\ifx\gptboxheight\undefined \newlength{\gptboxheight}\newlength{\gptboxwidth}\newsavebox{\gptboxtext}\fi \setlength{\fboxrule}{0.5pt}\setlength{\fboxsep}{1pt}\definecolor{tbcol}{rgb}{1,1,1}\begin{picture}(4520.00,2540.00)\gplgaddtomacro\gplbacktext{\colorrgb{0.00,0.00,0.00}\put(1107,575){\makebox(0,0)[r]{\strut{}\scriptsize $1\cdot 10^{-6}$}}\colorrgb{0.00,0.00,0.00}\put(1107,1143){\makebox(0,0)[r]{\strut{}\scriptsize $1\cdot 10^{-5}$}}\colorrgb{0.00,0.00,0.00}\put(1107,1711){\makebox(0,0)[r]{\strut{}\scriptsize $1\cdot 10^{-4}$}}\colorrgb{0.00,0.00,0.00}\put(1107,2280){\makebox(0,0)[r]{\strut{}\scriptsize $1\cdot 10^{-3}$}}\colorrgb{0.00,0.00,0.00}\put(1208,407){\makebox(0,0){\strut{}\scriptsize $1$}}\colorrgb{0.00,0.00,0.00}\put(1582,407){\makebox(0,0){\strut{}\scriptsize $2$}}\colorrgb{0.00,0.00,0.00}\put(1955,407){\makebox(0,0){\strut{}\scriptsize $3$}}\colorrgb{0.00,0.00,0.00}\put(2329,407){\makebox(0,0){\strut{}\scriptsize $4$}}\colorrgb{0.00,0.00,0.00}\put(2703,407){\makebox(0,0){\strut{}\scriptsize $5$}}\colorrgb{0.00,0.00,0.00}\put(3076,407){\makebox(0,0){\strut{}\scriptsize $6$}}\colorrgb{0.00,0.00,0.00}\put(3450,407){\makebox(0,0){\strut{}\scriptsize $7$}}\colorrgb{0.00,0.00,0.00}\put(3824,407){\makebox(0,0){\strut{}\scriptsize $8$}}\colorrgb{0.00,0.00,0.00}\put(4197,407){\makebox(0,0){\strut{}\scriptsize $9$}}}\gplgaddtomacro\gplfronttext{\csname LTb\endcsname \put(257,1427){\rotatebox{-270.00}{\makebox(0,0){\strut{}$L_2$ error}}}\csname LTb\endcsname \put(2703,167){\makebox(0,0){\strut{}iteration}}}\gplbacktext
    \put(0,0){\includegraphics[width={226.00bp},height={127.00bp}]{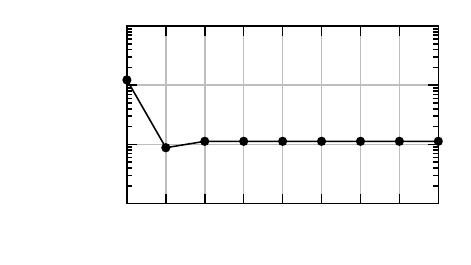}}\gplfronttext
  \end{picture}\endgroup
     \caption{$L_2$ error for the $d=2$ example.}\label{s2_rates_l2error}
\end{figure} In order to depict the frequency distribution, we have drawn boxes in \Cref{s2_rates_box} such that the area of the box represents the total amount of frequencies and each column corresponds to one ANOVA term with their width being the proportional number of frequencies.
Each column is then divided into rows for each occurring dimension in the ANOVA term, with the height being the proportional bandwidth.
\begin{figure} \centering
    \begingroup
  \makeatletter
  \providecommand\color[2][]{\GenericError{(gnuplot) \space\space\space\@spaces}{Package color not loaded in conjunction with
      terminal option `colourtext'}{See the gnuplot documentation for explanation.}{Either use 'blacktext' in gnuplot or load the package
      color.sty in LaTeX.}\renewcommand\color[2][]{}}\providecommand\includegraphics[2][]{\GenericError{(gnuplot) \space\space\space\@spaces}{Package graphicx or graphics not loaded}{See the gnuplot documentation for explanation.}{The gnuplot epslatex terminal needs graphicx.sty or graphics.sty.}\renewcommand\includegraphics[2][]{}}\providecommand\rotatebox[2]{#2}\@ifundefined{ifGPcolor}{\newif\ifGPcolor
    \GPcolortrue
  }{}\@ifundefined{ifGPblacktext}{\newif\ifGPblacktext
    \GPblacktexttrue
  }{}\let\gplgaddtomacro\g@addto@macro
\gdef\gplbacktext{}\gdef\gplfronttext{}\makeatother
  \ifGPblacktext
\def\colorrgb#1{}\def\colorgray#1{}\else
\ifGPcolor
      \def\colorrgb#1{\color[rgb]{#1}}\def\colorgray#1{\color[gray]{#1}}\expandafter\def\csname LTw\endcsname{\color{white}}\expandafter\def\csname LTb\endcsname{\color{black}}\expandafter\def\csname LTa\endcsname{\color{black}}\expandafter\def\csname LT0\endcsname{\color[rgb]{1,0,0}}\expandafter\def\csname LT1\endcsname{\color[rgb]{0,1,0}}\expandafter\def\csname LT2\endcsname{\color[rgb]{0,0,1}}\expandafter\def\csname LT3\endcsname{\color[rgb]{1,0,1}}\expandafter\def\csname LT4\endcsname{\color[rgb]{0,1,1}}\expandafter\def\csname LT5\endcsname{\color[rgb]{1,1,0}}\expandafter\def\csname LT6\endcsname{\color[rgb]{0,0,0}}\expandafter\def\csname LT7\endcsname{\color[rgb]{1,0.3,0}}\expandafter\def\csname LT8\endcsname{\color[rgb]{0.5,0.5,0.5}}\else
\def\colorrgb#1{\color{black}}\def\colorgray#1{\color[gray]{#1}}\expandafter\def\csname LTw\endcsname{\color{white}}\expandafter\def\csname LTb\endcsname{\color{black}}\expandafter\def\csname LTa\endcsname{\color{black}}\expandafter\def\csname LT0\endcsname{\color{black}}\expandafter\def\csname LT1\endcsname{\color{black}}\expandafter\def\csname LT2\endcsname{\color{black}}\expandafter\def\csname LT3\endcsname{\color{black}}\expandafter\def\csname LT4\endcsname{\color{black}}\expandafter\def\csname LT5\endcsname{\color{black}}\expandafter\def\csname LT6\endcsname{\color{black}}\expandafter\def\csname LT7\endcsname{\color{black}}\expandafter\def\csname LT8\endcsname{\color{black}}\fi
  \fi
    \setlength{\unitlength}{0.0500bp}\ifx\gptboxheight\undefined \newlength{\gptboxheight}\newlength{\gptboxwidth}\newsavebox{\gptboxtext}\fi \setlength{\fboxrule}{0.5pt}\setlength{\fboxsep}{1pt}\definecolor{tbcol}{rgb}{1,1,1}\begin{picture}(7920.00,2260.00)\gplgaddtomacro\gplbacktext{}\gplgaddtomacro\gplfronttext{\csname LTb\endcsname \put(549,940){\makebox(0,0){\strut{}\scriptsize 1}}\csname LTb\endcsname \put(706,940){\makebox(0,0){\strut{}\scriptsize 2}}\csname LTb\endcsname \put(2131,470){\makebox(0,0){\strut{}\scriptsize 1}}\csname LTb\endcsname \put(2131,1410){\makebox(0,0){\strut{}\scriptsize 2}}\csname LTb\endcsname \put(1974,2096){\makebox(0,0){\strut{}initial frequency distribution}}}\gplgaddtomacro\gplbacktext{}\gplgaddtomacro\gplfronttext{\csname LTb\endcsname \put(5232,940){\makebox(0,0){\strut{}\scriptsize 1}}\csname LTb\endcsname \put(6744,31){\makebox(0,0){\strut{}\scriptsize 1}}\csname LTb\endcsname \put(6744,971){\makebox(0,0){\strut{}\scriptsize 2}}\csname LTb\endcsname \put(5924,2096){\makebox(0,0){\strut{}final frequency distribution}}}\gplbacktext
    \put(0,0){\includegraphics[width={396.00bp},height={113.00bp}]{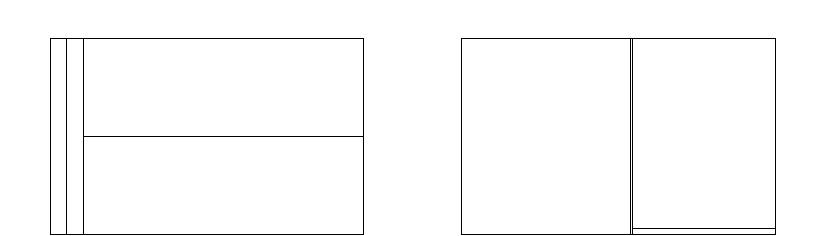}}\gplfronttext
  \end{picture}\endgroup
     \caption{Depiction of the frequency distribution in iteration $1$ and $9$ in the ANOVA terms for the $d=2$ example.}\label{s2_rates_box}
\end{figure} We observe that a lot more of the frequency budget was spent on the ANOVA term $\{1\}$ and only a few on $\{2\}$.
This is to be expected, as it requires more frequencies to approximate less smooth functions.
Furthermore, the same effect is observed within the ANOVA term $\{1,2\}$.

The  outcome for the experiment with noise is depicted in \Cref{s2_cv}.
\begin{figure} \centering
    \begingroup
  \makeatletter
  \providecommand\color[2][]{\GenericError{(gnuplot) \space\space\space\@spaces}{Package color not loaded in conjunction with
      terminal option `colourtext'}{See the gnuplot documentation for explanation.}{Either use 'blacktext' in gnuplot or load the package
      color.sty in LaTeX.}\renewcommand\color[2][]{}}\providecommand\includegraphics[2][]{\GenericError{(gnuplot) \space\space\space\@spaces}{Package graphicx or graphics not loaded}{See the gnuplot documentation for explanation.}{The gnuplot epslatex terminal needs graphicx.sty or graphics.sty.}\renewcommand\includegraphics[2][]{}}\providecommand\rotatebox[2]{#2}\@ifundefined{ifGPcolor}{\newif\ifGPcolor
    \GPcolortrue
  }{}\@ifundefined{ifGPblacktext}{\newif\ifGPblacktext
    \GPblacktexttrue
  }{}\let\gplgaddtomacro\g@addto@macro
\gdef\gplbacktext{}\gdef\gplfronttext{}\makeatother
  \ifGPblacktext
\def\colorrgb#1{}\def\colorgray#1{}\else
\ifGPcolor
      \def\colorrgb#1{\color[rgb]{#1}}\def\colorgray#1{\color[gray]{#1}}\expandafter\def\csname LTw\endcsname{\color{white}}\expandafter\def\csname LTb\endcsname{\color{black}}\expandafter\def\csname LTa\endcsname{\color{black}}\expandafter\def\csname LT0\endcsname{\color[rgb]{1,0,0}}\expandafter\def\csname LT1\endcsname{\color[rgb]{0,1,0}}\expandafter\def\csname LT2\endcsname{\color[rgb]{0,0,1}}\expandafter\def\csname LT3\endcsname{\color[rgb]{1,0,1}}\expandafter\def\csname LT4\endcsname{\color[rgb]{0,1,1}}\expandafter\def\csname LT5\endcsname{\color[rgb]{1,1,0}}\expandafter\def\csname LT6\endcsname{\color[rgb]{0,0,0}}\expandafter\def\csname LT7\endcsname{\color[rgb]{1,0.3,0}}\expandafter\def\csname LT8\endcsname{\color[rgb]{0.5,0.5,0.5}}\else
\def\colorrgb#1{\color{black}}\def\colorgray#1{\color[gray]{#1}}\expandafter\def\csname LTw\endcsname{\color{white}}\expandafter\def\csname LTb\endcsname{\color{black}}\expandafter\def\csname LTa\endcsname{\color{black}}\expandafter\def\csname LT0\endcsname{\color{black}}\expandafter\def\csname LT1\endcsname{\color{black}}\expandafter\def\csname LT2\endcsname{\color{black}}\expandafter\def\csname LT3\endcsname{\color{black}}\expandafter\def\csname LT4\endcsname{\color{black}}\expandafter\def\csname LT5\endcsname{\color{black}}\expandafter\def\csname LT6\endcsname{\color{black}}\expandafter\def\csname LT7\endcsname{\color{black}}\expandafter\def\csname LT8\endcsname{\color{black}}\fi
  \fi
    \setlength{\unitlength}{0.0500bp}\ifx\gptboxheight\undefined \newlength{\gptboxheight}\newlength{\gptboxwidth}\newsavebox{\gptboxtext}\fi \setlength{\fboxrule}{0.5pt}\setlength{\fboxsep}{1pt}\definecolor{tbcol}{rgb}{1,1,1}\begin{picture}(7920.00,2540.00)\gplgaddtomacro\gplbacktext{\colorrgb{0.00,0.00,0.00}\put(1711,599){\makebox(0,0)[r]{\strut{}\scriptsize $5\cdot 10^{-8}$}}\colorrgb{0.00,0.00,0.00}\put(1711,1041){\makebox(0,0)[r]{\strut{}\scriptsize $6\cdot 10^{-8}$}}\colorrgb{0.00,0.00,0.00}\put(1711,1415){\makebox(0,0)[r]{\strut{}\scriptsize $7\cdot 10^{-8}$}}\colorrgb{0.00,0.00,0.00}\put(1711,1739){\makebox(0,0)[r]{\strut{}\scriptsize $8\cdot 10^{-8}$}}\colorrgb{0.00,0.00,0.00}\put(1711,2024){\makebox(0,0)[r]{\strut{}\scriptsize $9\cdot 10^{-8}$}}\colorrgb{0.00,0.00,0.00}\put(1711,2280){\makebox(0,0)[r]{\strut{}\scriptsize $1\cdot 10^{-7}$}}\colorrgb{0.00,0.00,0.00}\put(2073,431){\makebox(0,0){\strut{}\scriptsize $10^{2}$}}\colorrgb{0.00,0.00,0.00}\put(2941,431){\makebox(0,0){\strut{}\scriptsize $10^{3}$}}\colorrgb{0.00,0.00,0.00}\put(3809,431){\makebox(0,0){\strut{}\scriptsize $10^{4}$}}\colorrgb{0.00,0.00,0.00}\put(4677,431){\makebox(0,0){\strut{}\scriptsize $10^{5}$}}}\gplgaddtomacro\gplfronttext{\csname LTb\endcsname \put(5382,2100){\makebox(0,0)[l]{\strut{}$\FCV(S_{\Psi_0(m)}^{\bm X}\bm y)$}}\csname LTb\endcsname \put(5382,1740){\makebox(0,0)[l]{\strut{}$\|f-S_{\Psi_0(m)}^{\bm X}\bm y\|_{L_2}^{2}+\sigma^2$}}\csname LTb\endcsname \put(5382,1381){\makebox(0,0)[l]{\strut{}$\FCV(S_{\Psi_1(m)}^{\bm X}\bm y)$}}\csname LTb\endcsname \put(5382,1021){\makebox(0,0)[l]{\strut{}$\|f-S_{\Psi_1(m)}^{\bm X}\bm y\|_{L_2}^{2}+\sigma^2$}}\csname LTb\endcsname \put(5382,662){\makebox(0,0)[l]{\strut{}$\FCV(S_{\Psi_2(m)}^{\bm X}\bm y)$}}\csname LTb\endcsname \put(5382,302){\makebox(0,0)[l]{\strut{}$\|f-S_{\Psi_2(m)}^{\bm X}\bm y\|_{L_2}^{2}+\sigma^2$}}\csname LTb\endcsname \put(3245,239){\makebox(0,0){\strut{}$m$}}}\gplbacktext
    \put(0,0){\includegraphics[width={396.00bp},height={127.00bp}]{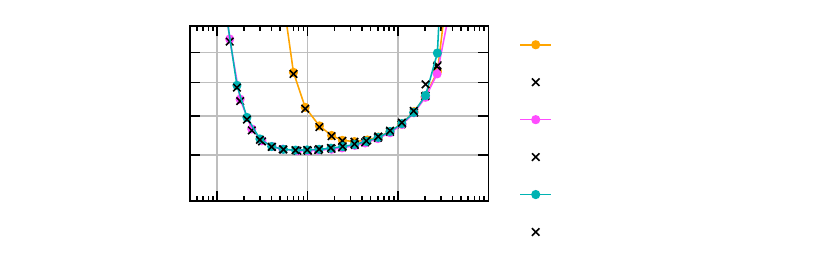}}\gplfronttext
  \end{picture}\endgroup
     \caption{Cross validation and $L_2$ error for the $d=2$ example with Gaussian noise.}\label{s2_cv}
\end{figure} Foremost, we observe that the fast cross-validation score $\FCV(S_{\mathcal I}^{\bm X}\bm y)$ is an excellent approximation for the $L_2$ error.
Further, we observe the expected under- and overfitting behavior.
With updating the frequency shape $\psi$, the overall error decay improves, which allows for a smaller error with fewer frequencies.
In the third iteration, we only observe a very slight improvement.
This aligns with our theoretical prediction from \Cref{fig:error_behavior}.

\subsection{Example with fixed superposition dimension} 

In the second example, we use the function
\begin{equation*}
    f(\bm x) = \frac{1}{a(\bm x)}
    \quad\text{with}\quad
    a(\bm x) = 1+\frac{1}{2}\sum_{j=1}^{d} j^{-q} \sin(2\pi x_j)
    \quad\text{and}\quad
    q=6
\end{equation*}
with spatial dimension $d=5$.
This function was considered in \cite{KKNS25,BGKS25} and solves the algebraic equation $a(\bm x)f(\bm x) = 1$, mimicking the features of a partial differential equation with a random coefficient whilst avoiding the complexity of a spatial variable or the need of a finite element solver.

For this function we restrict the ANOVA approximation to up to $3$-dimensional terms $U = \{\mathfrak u\in\mathcal P([d]) : |\mathfrak u| \le 3\}$.
For the noise-free experiment, the $L_2$ error is depicted in \Cref{s5_rates_l2error} and the frequency distribution in \Cref{s5_rates_box}.

\begin{figure} \centering
    \begingroup
  \makeatletter
  \providecommand\color[2][]{\GenericError{(gnuplot) \space\space\space\@spaces}{Package color not loaded in conjunction with
      terminal option `colourtext'}{See the gnuplot documentation for explanation.}{Either use 'blacktext' in gnuplot or load the package
      color.sty in LaTeX.}\renewcommand\color[2][]{}}\providecommand\includegraphics[2][]{\GenericError{(gnuplot) \space\space\space\@spaces}{Package graphicx or graphics not loaded}{See the gnuplot documentation for explanation.}{The gnuplot epslatex terminal needs graphicx.sty or graphics.sty.}\renewcommand\includegraphics[2][]{}}\providecommand\rotatebox[2]{#2}\@ifundefined{ifGPcolor}{\newif\ifGPcolor
    \GPcolortrue
  }{}\@ifundefined{ifGPblacktext}{\newif\ifGPblacktext
    \GPblacktexttrue
  }{}\let\gplgaddtomacro\g@addto@macro
\gdef\gplbacktext{}\gdef\gplfronttext{}\makeatother
  \ifGPblacktext
\def\colorrgb#1{}\def\colorgray#1{}\else
\ifGPcolor
      \def\colorrgb#1{\color[rgb]{#1}}\def\colorgray#1{\color[gray]{#1}}\expandafter\def\csname LTw\endcsname{\color{white}}\expandafter\def\csname LTb\endcsname{\color{black}}\expandafter\def\csname LTa\endcsname{\color{black}}\expandafter\def\csname LT0\endcsname{\color[rgb]{1,0,0}}\expandafter\def\csname LT1\endcsname{\color[rgb]{0,1,0}}\expandafter\def\csname LT2\endcsname{\color[rgb]{0,0,1}}\expandafter\def\csname LT3\endcsname{\color[rgb]{1,0,1}}\expandafter\def\csname LT4\endcsname{\color[rgb]{0,1,1}}\expandafter\def\csname LT5\endcsname{\color[rgb]{1,1,0}}\expandafter\def\csname LT6\endcsname{\color[rgb]{0,0,0}}\expandafter\def\csname LT7\endcsname{\color[rgb]{1,0.3,0}}\expandafter\def\csname LT8\endcsname{\color[rgb]{0.5,0.5,0.5}}\else
\def\colorrgb#1{\color{black}}\def\colorgray#1{\color[gray]{#1}}\expandafter\def\csname LTw\endcsname{\color{white}}\expandafter\def\csname LTb\endcsname{\color{black}}\expandafter\def\csname LTa\endcsname{\color{black}}\expandafter\def\csname LT0\endcsname{\color{black}}\expandafter\def\csname LT1\endcsname{\color{black}}\expandafter\def\csname LT2\endcsname{\color{black}}\expandafter\def\csname LT3\endcsname{\color{black}}\expandafter\def\csname LT4\endcsname{\color{black}}\expandafter\def\csname LT5\endcsname{\color{black}}\expandafter\def\csname LT6\endcsname{\color{black}}\expandafter\def\csname LT7\endcsname{\color{black}}\expandafter\def\csname LT8\endcsname{\color{black}}\fi
  \fi
    \setlength{\unitlength}{0.0500bp}\ifx\gptboxheight\undefined \newlength{\gptboxheight}\newlength{\gptboxwidth}\newsavebox{\gptboxtext}\fi \setlength{\fboxrule}{0.5pt}\setlength{\fboxsep}{1pt}\definecolor{tbcol}{rgb}{1,1,1}\begin{picture}(4520.00,2540.00)\gplgaddtomacro\gplbacktext{\colorrgb{0.00,0.00,0.00}\put(1107,575){\makebox(0,0)[r]{\strut{}\scriptsize $10^{-8}$}}\colorrgb{0.00,0.00,0.00}\put(1107,1427){\makebox(0,0)[r]{\strut{}\scriptsize $10^{-7}$}}\colorrgb{0.00,0.00,0.00}\put(1107,2280){\makebox(0,0)[r]{\strut{}\scriptsize $10^{-6}$}}\colorrgb{0.00,0.00,0.00}\put(1208,407){\makebox(0,0){\strut{}\scriptsize $1$}}\colorrgb{0.00,0.00,0.00}\put(1582,407){\makebox(0,0){\strut{}\scriptsize $2$}}\colorrgb{0.00,0.00,0.00}\put(1955,407){\makebox(0,0){\strut{}\scriptsize $3$}}\colorrgb{0.00,0.00,0.00}\put(2329,407){\makebox(0,0){\strut{}\scriptsize $4$}}\colorrgb{0.00,0.00,0.00}\put(2703,407){\makebox(0,0){\strut{}\scriptsize $5$}}\colorrgb{0.00,0.00,0.00}\put(3076,407){\makebox(0,0){\strut{}\scriptsize $6$}}\colorrgb{0.00,0.00,0.00}\put(3450,407){\makebox(0,0){\strut{}\scriptsize $7$}}\colorrgb{0.00,0.00,0.00}\put(3824,407){\makebox(0,0){\strut{}\scriptsize $8$}}\colorrgb{0.00,0.00,0.00}\put(4197,407){\makebox(0,0){\strut{}\scriptsize $9$}}}\gplgaddtomacro\gplfronttext{\csname LTb\endcsname \put(559,1427){\rotatebox{-270.00}{\makebox(0,0){\strut{}$L_2$ error}}}\csname LTb\endcsname \put(2703,167){\makebox(0,0){\strut{}iteration}}}\gplbacktext
    \put(0,0){\includegraphics[width={226.00bp},height={127.00bp}]{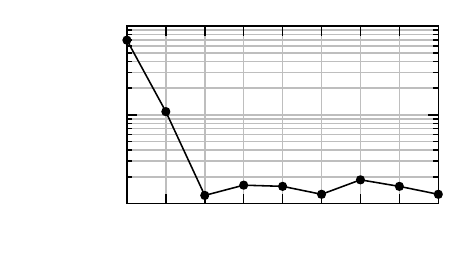}}\gplfronttext
  \end{picture}\endgroup
     \caption{$L_2$ error for the $d=5$ example.}\label{s5_rates_l2error}
\end{figure} 

\begin{figure} \centering
    \begingroup
  \makeatletter
  \providecommand\color[2][]{\GenericError{(gnuplot) \space\space\space\@spaces}{Package color not loaded in conjunction with
      terminal option `colourtext'}{See the gnuplot documentation for explanation.}{Either use 'blacktext' in gnuplot or load the package
      color.sty in LaTeX.}\renewcommand\color[2][]{}}\providecommand\includegraphics[2][]{\GenericError{(gnuplot) \space\space\space\@spaces}{Package graphicx or graphics not loaded}{See the gnuplot documentation for explanation.}{The gnuplot epslatex terminal needs graphicx.sty or graphics.sty.}\renewcommand\includegraphics[2][]{}}\providecommand\rotatebox[2]{#2}\@ifundefined{ifGPcolor}{\newif\ifGPcolor
    \GPcolortrue
  }{}\@ifundefined{ifGPblacktext}{\newif\ifGPblacktext
    \GPblacktexttrue
  }{}\let\gplgaddtomacro\g@addto@macro
\gdef\gplbacktext{}\gdef\gplfronttext{}\makeatother
  \ifGPblacktext
\def\colorrgb#1{}\def\colorgray#1{}\else
\ifGPcolor
      \def\colorrgb#1{\color[rgb]{#1}}\def\colorgray#1{\color[gray]{#1}}\expandafter\def\csname LTw\endcsname{\color{white}}\expandafter\def\csname LTb\endcsname{\color{black}}\expandafter\def\csname LTa\endcsname{\color{black}}\expandafter\def\csname LT0\endcsname{\color[rgb]{1,0,0}}\expandafter\def\csname LT1\endcsname{\color[rgb]{0,1,0}}\expandafter\def\csname LT2\endcsname{\color[rgb]{0,0,1}}\expandafter\def\csname LT3\endcsname{\color[rgb]{1,0,1}}\expandafter\def\csname LT4\endcsname{\color[rgb]{0,1,1}}\expandafter\def\csname LT5\endcsname{\color[rgb]{1,1,0}}\expandafter\def\csname LT6\endcsname{\color[rgb]{0,0,0}}\expandafter\def\csname LT7\endcsname{\color[rgb]{1,0.3,0}}\expandafter\def\csname LT8\endcsname{\color[rgb]{0.5,0.5,0.5}}\else
\def\colorrgb#1{\color{black}}\def\colorgray#1{\color[gray]{#1}}\expandafter\def\csname LTw\endcsname{\color{white}}\expandafter\def\csname LTb\endcsname{\color{black}}\expandafter\def\csname LTa\endcsname{\color{black}}\expandafter\def\csname LT0\endcsname{\color{black}}\expandafter\def\csname LT1\endcsname{\color{black}}\expandafter\def\csname LT2\endcsname{\color{black}}\expandafter\def\csname LT3\endcsname{\color{black}}\expandafter\def\csname LT4\endcsname{\color{black}}\expandafter\def\csname LT5\endcsname{\color{black}}\expandafter\def\csname LT6\endcsname{\color{black}}\expandafter\def\csname LT7\endcsname{\color{black}}\expandafter\def\csname LT8\endcsname{\color{black}}\fi
  \fi
    \setlength{\unitlength}{0.0500bp}\ifx\gptboxheight\undefined \newlength{\gptboxheight}\newlength{\gptboxwidth}\newsavebox{\gptboxtext}\fi \setlength{\fboxrule}{0.5pt}\setlength{\fboxsep}{1pt}\definecolor{tbcol}{rgb}{1,1,1}\begin{picture}(7920.00,2260.00)\gplgaddtomacro\gplbacktext{}\gplgaddtomacro\gplfronttext{\csname LTb\endcsname \put(1488,313){\makebox(0,0){\strut{}\scriptsize 1}}\csname LTb\endcsname \put(1488,940){\makebox(0,0){\strut{}\scriptsize 2}}\csname LTb\endcsname \put(1488,1566){\makebox(0,0){\strut{}\scriptsize 3}}\csname LTb\endcsname \put(1697,313){\makebox(0,0){\strut{}\scriptsize 1}}\csname LTb\endcsname \put(1697,940){\makebox(0,0){\strut{}\scriptsize 2}}\csname LTb\endcsname \put(1697,1566){\makebox(0,0){\strut{}\scriptsize 4}}\csname LTb\endcsname \put(1907,313){\makebox(0,0){\strut{}\scriptsize 1}}\csname LTb\endcsname \put(1907,940){\makebox(0,0){\strut{}\scriptsize 2}}\csname LTb\endcsname \put(1907,1566){\makebox(0,0){\strut{}\scriptsize 5}}\csname LTb\endcsname \put(2117,313){\makebox(0,0){\strut{}\scriptsize 1}}\csname LTb\endcsname \put(2117,940){\makebox(0,0){\strut{}\scriptsize 3}}\csname LTb\endcsname \put(2117,1566){\makebox(0,0){\strut{}\scriptsize 4}}\csname LTb\endcsname \put(2326,313){\makebox(0,0){\strut{}\scriptsize 1}}\csname LTb\endcsname \put(2326,940){\makebox(0,0){\strut{}\scriptsize 3}}\csname LTb\endcsname \put(2326,1566){\makebox(0,0){\strut{}\scriptsize 5}}\csname LTb\endcsname \put(2536,313){\makebox(0,0){\strut{}\scriptsize 1}}\csname LTb\endcsname \put(2536,940){\makebox(0,0){\strut{}\scriptsize 4}}\csname LTb\endcsname \put(2536,1566){\makebox(0,0){\strut{}\scriptsize 5}}\csname LTb\endcsname \put(2745,313){\makebox(0,0){\strut{}\scriptsize 2}}\csname LTb\endcsname \put(2745,940){\makebox(0,0){\strut{}\scriptsize 3}}\csname LTb\endcsname \put(2745,1566){\makebox(0,0){\strut{}\scriptsize 4}}\csname LTb\endcsname \put(2955,313){\makebox(0,0){\strut{}\scriptsize 2}}\csname LTb\endcsname \put(2955,940){\makebox(0,0){\strut{}\scriptsize 3}}\csname LTb\endcsname \put(2955,1566){\makebox(0,0){\strut{}\scriptsize 5}}\csname LTb\endcsname \put(3164,313){\makebox(0,0){\strut{}\scriptsize 2}}\csname LTb\endcsname \put(3164,940){\makebox(0,0){\strut{}\scriptsize 4}}\csname LTb\endcsname \put(3164,1566){\makebox(0,0){\strut{}\scriptsize 5}}\csname LTb\endcsname \put(3374,313){\makebox(0,0){\strut{}\scriptsize 3}}\csname LTb\endcsname \put(3374,940){\makebox(0,0){\strut{}\scriptsize 4}}\csname LTb\endcsname \put(3374,1566){\makebox(0,0){\strut{}\scriptsize 5}}\csname LTb\endcsname \put(1974,2096){\makebox(0,0){\strut{}initial frequency distribution}}}\gplgaddtomacro\gplbacktext{}\gplgaddtomacro\gplfronttext{\csname LTb\endcsname \put(4544,876){\makebox(0,0){\strut{}\scriptsize 1}}\csname LTb\endcsname \put(4544,1817){\makebox(0,0){\strut{}\scriptsize 2}}\csname LTb\endcsname \put(5170,824){\makebox(0,0){\strut{}\scriptsize 1}}\csname LTb\endcsname \put(5170,1706){\makebox(0,0){\strut{}\scriptsize 2}}\csname LTb\endcsname \put(5170,1822){\makebox(0,0){\strut{}\scriptsize 3}}\csname LTb\endcsname \put(5645,821){\makebox(0,0){\strut{}\scriptsize 1}}\csname LTb\endcsname \put(5645,1701){\makebox(0,0){\strut{}\scriptsize 2}}\csname LTb\endcsname \put(5645,1820){\makebox(0,0){\strut{}\scriptsize 4}}\csname LTb\endcsname \put(6087,807){\makebox(0,0){\strut{}\scriptsize 1}}\csname LTb\endcsname \put(6087,1681){\makebox(0,0){\strut{}\scriptsize 2}}\csname LTb\endcsname \put(6087,1814){\makebox(0,0){\strut{}\scriptsize 5}}\csname LTb\endcsname \put(6488,799){\makebox(0,0){\strut{}\scriptsize 1}}\csname LTb\endcsname \put(6488,1669){\makebox(0,0){\strut{}\scriptsize 3}}\csname LTb\endcsname \put(6488,1810){\makebox(0,0){\strut{}\scriptsize 4}}\csname LTb\endcsname \put(6841,775){\makebox(0,0){\strut{}\scriptsize 1}}\csname LTb\endcsname \put(6841,1632){\makebox(0,0){\strut{}\scriptsize 3}}\csname LTb\endcsname \put(6841,1797){\makebox(0,0){\strut{}\scriptsize 5}}\csname LTb\endcsname \put(7147,762){\makebox(0,0){\strut{}\scriptsize 1}}\csname LTb\endcsname \put(7147,1614){\makebox(0,0){\strut{}\scriptsize 4}}\csname LTb\endcsname \put(7147,1791){\makebox(0,0){\strut{}\scriptsize 5}}\csname LTb\endcsname \put(5924,2096){\makebox(0,0){\strut{}final frequency distribution}}}\gplbacktext
    \put(0,0){\includegraphics[width={396.00bp},height={113.00bp}]{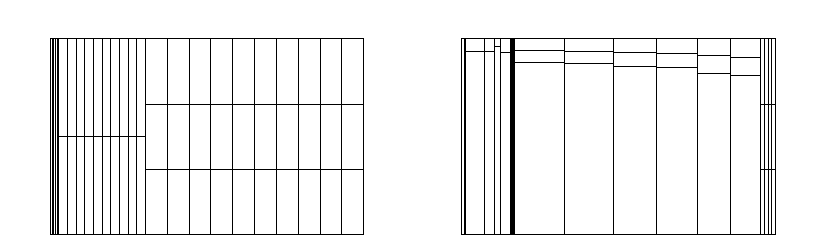}}\gplfronttext
  \end{picture}\endgroup
     \caption{Depiction of the frequency distribution in iteration $1$ and $9$ in the ANOVA terms for the $d=5$ example.}\label{s5_rates_box}
\end{figure} 

In the first and second iterations, the $L_2$ error lessens by a factor of $10$ each before it stabilizes.
This shows the effect of a better approximation yielding a better estimation of the smoothness parameters, which in turn yields a better approximation before a fixed point is reached.
In the frequency distribution we see that more frequencies are spent for smaller dimensions.
This is to be expected, as the function has decaying weights with increasing dimension.

When noise is added, we obtain the outcome depicted in \Cref{s2_cv}.
\begin{figure} \centering
    \begingroup
  \makeatletter
  \providecommand\color[2][]{\GenericError{(gnuplot) \space\space\space\@spaces}{Package color not loaded in conjunction with
      terminal option `colourtext'}{See the gnuplot documentation for explanation.}{Either use 'blacktext' in gnuplot or load the package
      color.sty in LaTeX.}\renewcommand\color[2][]{}}\providecommand\includegraphics[2][]{\GenericError{(gnuplot) \space\space\space\@spaces}{Package graphicx or graphics not loaded}{See the gnuplot documentation for explanation.}{The gnuplot epslatex terminal needs graphicx.sty or graphics.sty.}\renewcommand\includegraphics[2][]{}}\providecommand\rotatebox[2]{#2}\@ifundefined{ifGPcolor}{\newif\ifGPcolor
    \GPcolortrue
  }{}\@ifundefined{ifGPblacktext}{\newif\ifGPblacktext
    \GPblacktexttrue
  }{}\let\gplgaddtomacro\g@addto@macro
\gdef\gplbacktext{}\gdef\gplfronttext{}\makeatother
  \ifGPblacktext
\def\colorrgb#1{}\def\colorgray#1{}\else
\ifGPcolor
      \def\colorrgb#1{\color[rgb]{#1}}\def\colorgray#1{\color[gray]{#1}}\expandafter\def\csname LTw\endcsname{\color{white}}\expandafter\def\csname LTb\endcsname{\color{black}}\expandafter\def\csname LTa\endcsname{\color{black}}\expandafter\def\csname LT0\endcsname{\color[rgb]{1,0,0}}\expandafter\def\csname LT1\endcsname{\color[rgb]{0,1,0}}\expandafter\def\csname LT2\endcsname{\color[rgb]{0,0,1}}\expandafter\def\csname LT3\endcsname{\color[rgb]{1,0,1}}\expandafter\def\csname LT4\endcsname{\color[rgb]{0,1,1}}\expandafter\def\csname LT5\endcsname{\color[rgb]{1,1,0}}\expandafter\def\csname LT6\endcsname{\color[rgb]{0,0,0}}\expandafter\def\csname LT7\endcsname{\color[rgb]{1,0.3,0}}\expandafter\def\csname LT8\endcsname{\color[rgb]{0.5,0.5,0.5}}\else
\def\colorrgb#1{\color{black}}\def\colorgray#1{\color[gray]{#1}}\expandafter\def\csname LTw\endcsname{\color{white}}\expandafter\def\csname LTb\endcsname{\color{black}}\expandafter\def\csname LTa\endcsname{\color{black}}\expandafter\def\csname LT0\endcsname{\color{black}}\expandafter\def\csname LT1\endcsname{\color{black}}\expandafter\def\csname LT2\endcsname{\color{black}}\expandafter\def\csname LT3\endcsname{\color{black}}\expandafter\def\csname LT4\endcsname{\color{black}}\expandafter\def\csname LT5\endcsname{\color{black}}\expandafter\def\csname LT6\endcsname{\color{black}}\expandafter\def\csname LT7\endcsname{\color{black}}\expandafter\def\csname LT8\endcsname{\color{black}}\fi
  \fi
    \setlength{\unitlength}{0.0500bp}\ifx\gptboxheight\undefined \newlength{\gptboxheight}\newlength{\gptboxwidth}\newsavebox{\gptboxtext}\fi \setlength{\fboxrule}{0.5pt}\setlength{\fboxsep}{1pt}\definecolor{tbcol}{rgb}{1,1,1}\begin{picture}(7920.00,2540.00)\gplgaddtomacro\gplbacktext{\colorrgb{0.00,0.00,0.00}\put(1711,599){\makebox(0,0)[r]{\strut{}\scriptsize $1\cdot 10^{-5}$}}\colorrgb{0.00,0.00,0.00}\put(1711,1439){\makebox(0,0)[r]{\strut{}\scriptsize $2\cdot 10^{-5}$}}\colorrgb{0.00,0.00,0.00}\put(1711,1931){\makebox(0,0)[r]{\strut{}\scriptsize $3\cdot 10^{-5}$}}\colorrgb{0.00,0.00,0.00}\put(1711,2280){\makebox(0,0)[r]{\strut{}\scriptsize $4\cdot 10^{-5}$}}\colorrgb{0.00,0.00,0.00}\put(2406,431){\makebox(0,0){\strut{}\scriptsize $10^{3}$}}\colorrgb{0.00,0.00,0.00}\put(3541,431){\makebox(0,0){\strut{}\scriptsize $10^{4}$}}\colorrgb{0.00,0.00,0.00}\put(4677,431){\makebox(0,0){\strut{}\scriptsize $10^{5}$}}}\gplgaddtomacro\gplfronttext{\csname LTb\endcsname \put(5382,2100){\makebox(0,0)[l]{\strut{}$\FCV(S_{\Psi_0(m)}^{\bm X}\bm y)$}}\csname LTb\endcsname \put(5382,1740){\makebox(0,0)[l]{\strut{}$\|f-S_{\Psi_0(m)}^{\bm X}\bm y\|_{L_2}^{2}+\sigma^2$}}\csname LTb\endcsname \put(5382,1381){\makebox(0,0)[l]{\strut{}$\FCV(S_{\Psi_1(m)}^{\bm X}\bm y)$}}\csname LTb\endcsname \put(5382,1021){\makebox(0,0)[l]{\strut{}$\|f-S_{\Psi_1(m)}^{\bm X}\bm y\|_{L_2}^{2}+\sigma^2$}}\csname LTb\endcsname \put(5382,662){\makebox(0,0)[l]{\strut{}$\FCV(S_{\Psi_2(m)}^{\bm X}\bm y)$}}\csname LTb\endcsname \put(5382,302){\makebox(0,0)[l]{\strut{}$\|f-S_{\Psi_2(m)}^{\bm X}\bm y\|_{L_2}^{2}+\sigma^2$}}\csname LTb\endcsname \put(3245,239){\makebox(0,0){\strut{}$m$}}}\gplbacktext
    \put(0,0){\includegraphics[width={396.00bp},height={127.00bp}]{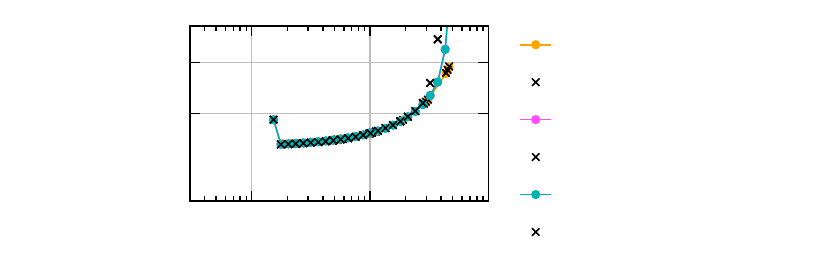}}\gplfronttext
  \end{picture}\endgroup
     \caption{Cross validation and $L_2$ error for the $d=5$ example with Gaussian noise.}\label{s5_cv}
\end{figure} Here we are restricted by a high minimum number of frequencies, as the number of ANOVA terms is high and we use at least $5^{|\mathfrak u|}$ frequencies in each to make decay rates detectable.
With this high number of frequencies we are forced to only work in the overfitting regime, where the number of points and frequencies dominates the error behavior but not the shape, which is why we do not see an improvement.
This could be improved by manually omitting small ANOVA terms in terms of the $L_2$ norm or global sensitivity indices, cf.~\Cref{sec:anova}.

\subsection{Example with known ANOVA terms} 

The third example is a $9$-dimensional combination of B-splines
\begin{align*}
    f(\bm x) &= \frac{1}{4.617\dots} \Big( B_2(x_1)B_4(x_2)B_6(x_3) + B_2(x_4)B_4(x_5) + B_6(x_5)B_2(x_6) \\
    &\phantom=+ B_4(x_6)B_6(x_7) + B_2(x_7)B_4(x_8) + B_6(x_8)B_2(x_9) + B_4(x_9)B_6(x_{10}) \Big) \,.
\end{align*}
Functions of this type were already used in \cite{PS21,BPS22,PS22}.
The B-spline of order $n$ is a piecewise polynomial of order $n$, which has smoothness $s = n-1/2$.
In this example we assume to know the existing ANOVA terms, which could be determined via global sensitivity indices, cf.~\Cref{sec:anova}.
In contrast to \eqref{eq:2}, for a product of B-splines the lower-dimensional ANOVA terms have to be included as well.

For the noise-free experiment, the $L_2$ error is depicted in \Cref{s10_rates_l2error} and the frequency distribution in \Cref{s10_rates_box}.

\begin{figure} \centering
    \begingroup
  \makeatletter
  \providecommand\color[2][]{\GenericError{(gnuplot) \space\space\space\@spaces}{Package color not loaded in conjunction with
      terminal option `colourtext'}{See the gnuplot documentation for explanation.}{Either use 'blacktext' in gnuplot or load the package
      color.sty in LaTeX.}\renewcommand\color[2][]{}}\providecommand\includegraphics[2][]{\GenericError{(gnuplot) \space\space\space\@spaces}{Package graphicx or graphics not loaded}{See the gnuplot documentation for explanation.}{The gnuplot epslatex terminal needs graphicx.sty or graphics.sty.}\renewcommand\includegraphics[2][]{}}\providecommand\rotatebox[2]{#2}\@ifundefined{ifGPcolor}{\newif\ifGPcolor
    \GPcolortrue
  }{}\@ifundefined{ifGPblacktext}{\newif\ifGPblacktext
    \GPblacktexttrue
  }{}\let\gplgaddtomacro\g@addto@macro
\gdef\gplbacktext{}\gdef\gplfronttext{}\makeatother
  \ifGPblacktext
\def\colorrgb#1{}\def\colorgray#1{}\else
\ifGPcolor
      \def\colorrgb#1{\color[rgb]{#1}}\def\colorgray#1{\color[gray]{#1}}\expandafter\def\csname LTw\endcsname{\color{white}}\expandafter\def\csname LTb\endcsname{\color{black}}\expandafter\def\csname LTa\endcsname{\color{black}}\expandafter\def\csname LT0\endcsname{\color[rgb]{1,0,0}}\expandafter\def\csname LT1\endcsname{\color[rgb]{0,1,0}}\expandafter\def\csname LT2\endcsname{\color[rgb]{0,0,1}}\expandafter\def\csname LT3\endcsname{\color[rgb]{1,0,1}}\expandafter\def\csname LT4\endcsname{\color[rgb]{0,1,1}}\expandafter\def\csname LT5\endcsname{\color[rgb]{1,1,0}}\expandafter\def\csname LT6\endcsname{\color[rgb]{0,0,0}}\expandafter\def\csname LT7\endcsname{\color[rgb]{1,0.3,0}}\expandafter\def\csname LT8\endcsname{\color[rgb]{0.5,0.5,0.5}}\else
\def\colorrgb#1{\color{black}}\def\colorgray#1{\color[gray]{#1}}\expandafter\def\csname LTw\endcsname{\color{white}}\expandafter\def\csname LTb\endcsname{\color{black}}\expandafter\def\csname LTa\endcsname{\color{black}}\expandafter\def\csname LT0\endcsname{\color{black}}\expandafter\def\csname LT1\endcsname{\color{black}}\expandafter\def\csname LT2\endcsname{\color{black}}\expandafter\def\csname LT3\endcsname{\color{black}}\expandafter\def\csname LT4\endcsname{\color{black}}\expandafter\def\csname LT5\endcsname{\color{black}}\expandafter\def\csname LT6\endcsname{\color{black}}\expandafter\def\csname LT7\endcsname{\color{black}}\expandafter\def\csname LT8\endcsname{\color{black}}\fi
  \fi
    \setlength{\unitlength}{0.0500bp}\ifx\gptboxheight\undefined \newlength{\gptboxheight}\newlength{\gptboxwidth}\newsavebox{\gptboxtext}\fi \setlength{\fboxrule}{0.5pt}\setlength{\fboxsep}{1pt}\definecolor{tbcol}{rgb}{1,1,1}\begin{picture}(4520.00,2540.00)\gplgaddtomacro\gplbacktext{\colorrgb{0.00,0.00,0.00}\put(1107,575){\makebox(0,0)[r]{\strut{}\scriptsize $10^{-4}$}}\colorrgb{0.00,0.00,0.00}\put(1107,1427){\makebox(0,0)[r]{\strut{}\scriptsize $10^{-3}$}}\colorrgb{0.00,0.00,0.00}\put(1107,2280){\makebox(0,0)[r]{\strut{}\scriptsize $10^{-2}$}}\colorrgb{0.00,0.00,0.00}\put(1208,407){\makebox(0,0){\strut{}\scriptsize $1$}}\colorrgb{0.00,0.00,0.00}\put(1582,407){\makebox(0,0){\strut{}\scriptsize $2$}}\colorrgb{0.00,0.00,0.00}\put(1955,407){\makebox(0,0){\strut{}\scriptsize $3$}}\colorrgb{0.00,0.00,0.00}\put(2329,407){\makebox(0,0){\strut{}\scriptsize $4$}}\colorrgb{0.00,0.00,0.00}\put(2703,407){\makebox(0,0){\strut{}\scriptsize $5$}}\colorrgb{0.00,0.00,0.00}\put(3076,407){\makebox(0,0){\strut{}\scriptsize $6$}}\colorrgb{0.00,0.00,0.00}\put(3450,407){\makebox(0,0){\strut{}\scriptsize $7$}}\colorrgb{0.00,0.00,0.00}\put(3824,407){\makebox(0,0){\strut{}\scriptsize $8$}}\colorrgb{0.00,0.00,0.00}\put(4197,407){\makebox(0,0){\strut{}\scriptsize $9$}}}\gplgaddtomacro\gplfronttext{\csname LTb\endcsname \put(559,1427){\rotatebox{-270.00}{\makebox(0,0){\strut{}$L_2$ error}}}\csname LTb\endcsname \put(2703,167){\makebox(0,0){\strut{}iteration}}}\gplbacktext
    \put(0,0){\includegraphics[width={226.00bp},height={127.00bp}]{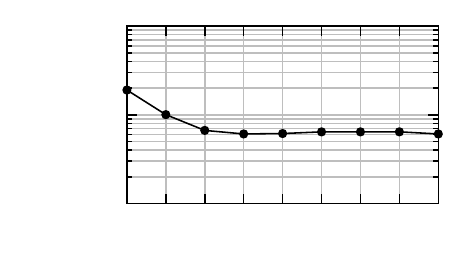}}\gplfronttext
  \end{picture}\endgroup
     \caption{$L_2$ error for the $d=10$ example.}\label{s10_rates_l2error}
\end{figure} 

\begin{figure} \centering
    \begingroup
  \makeatletter
  \providecommand\color[2][]{\GenericError{(gnuplot) \space\space\space\@spaces}{Package color not loaded in conjunction with
      terminal option `colourtext'}{See the gnuplot documentation for explanation.}{Either use 'blacktext' in gnuplot or load the package
      color.sty in LaTeX.}\renewcommand\color[2][]{}}\providecommand\includegraphics[2][]{\GenericError{(gnuplot) \space\space\space\@spaces}{Package graphicx or graphics not loaded}{See the gnuplot documentation for explanation.}{The gnuplot epslatex terminal needs graphicx.sty or graphics.sty.}\renewcommand\includegraphics[2][]{}}\providecommand\rotatebox[2]{#2}\@ifundefined{ifGPcolor}{\newif\ifGPcolor
    \GPcolortrue
  }{}\@ifundefined{ifGPblacktext}{\newif\ifGPblacktext
    \GPblacktexttrue
  }{}\let\gplgaddtomacro\g@addto@macro
\gdef\gplbacktext{}\gdef\gplfronttext{}\makeatother
  \ifGPblacktext
\def\colorrgb#1{}\def\colorgray#1{}\else
\ifGPcolor
      \def\colorrgb#1{\color[rgb]{#1}}\def\colorgray#1{\color[gray]{#1}}\expandafter\def\csname LTw\endcsname{\color{white}}\expandafter\def\csname LTb\endcsname{\color{black}}\expandafter\def\csname LTa\endcsname{\color{black}}\expandafter\def\csname LT0\endcsname{\color[rgb]{1,0,0}}\expandafter\def\csname LT1\endcsname{\color[rgb]{0,1,0}}\expandafter\def\csname LT2\endcsname{\color[rgb]{0,0,1}}\expandafter\def\csname LT3\endcsname{\color[rgb]{1,0,1}}\expandafter\def\csname LT4\endcsname{\color[rgb]{0,1,1}}\expandafter\def\csname LT5\endcsname{\color[rgb]{1,1,0}}\expandafter\def\csname LT6\endcsname{\color[rgb]{0,0,0}}\expandafter\def\csname LT7\endcsname{\color[rgb]{1,0.3,0}}\expandafter\def\csname LT8\endcsname{\color[rgb]{0.5,0.5,0.5}}\else
\def\colorrgb#1{\color{black}}\def\colorgray#1{\color[gray]{#1}}\expandafter\def\csname LTw\endcsname{\color{white}}\expandafter\def\csname LTb\endcsname{\color{black}}\expandafter\def\csname LTa\endcsname{\color{black}}\expandafter\def\csname LT0\endcsname{\color{black}}\expandafter\def\csname LT1\endcsname{\color{black}}\expandafter\def\csname LT2\endcsname{\color{black}}\expandafter\def\csname LT3\endcsname{\color{black}}\expandafter\def\csname LT4\endcsname{\color{black}}\expandafter\def\csname LT5\endcsname{\color{black}}\expandafter\def\csname LT6\endcsname{\color{black}}\expandafter\def\csname LT7\endcsname{\color{black}}\expandafter\def\csname LT8\endcsname{\color{black}}\fi
  \fi
    \setlength{\unitlength}{0.0500bp}\ifx\gptboxheight\undefined \newlength{\gptboxheight}\newlength{\gptboxwidth}\newsavebox{\gptboxtext}\fi \setlength{\fboxrule}{0.5pt}\setlength{\fboxsep}{1pt}\definecolor{tbcol}{rgb}{1,1,1}\begin{picture}(7920.00,2260.00)\gplgaddtomacro\gplbacktext{}\gplgaddtomacro\gplfronttext{\csname LTb\endcsname \put(889,470){\makebox(0,0){\strut{}\scriptsize 1}}\csname LTb\endcsname \put(889,1410){\makebox(0,0){\strut{}\scriptsize 2}}\csname LTb\endcsname \put(1122,470){\makebox(0,0){\strut{}\scriptsize 1}}\csname LTb\endcsname \put(1122,1410){\makebox(0,0){\strut{}\scriptsize 3}}\csname LTb\endcsname \put(1355,470){\makebox(0,0){\strut{}\scriptsize 2}}\csname LTb\endcsname \put(1355,1410){\makebox(0,0){\strut{}\scriptsize 3}}\csname LTb\endcsname \put(1588,470){\makebox(0,0){\strut{}\scriptsize 4}}\csname LTb\endcsname \put(1588,1410){\makebox(0,0){\strut{}\scriptsize 5}}\csname LTb\endcsname \put(1821,470){\makebox(0,0){\strut{}\scriptsize 5}}\csname LTb\endcsname \put(1821,1410){\makebox(0,0){\strut{}\scriptsize 6}}\csname LTb\endcsname \put(2054,470){\makebox(0,0){\strut{}\scriptsize 6}}\csname LTb\endcsname \put(2054,1410){\makebox(0,0){\strut{}\scriptsize 7}}\csname LTb\endcsname \put(2287,470){\makebox(0,0){\strut{}\scriptsize 7}}\csname LTb\endcsname \put(2287,1410){\makebox(0,0){\strut{}\scriptsize 8}}\csname LTb\endcsname \put(2520,470){\makebox(0,0){\strut{}\scriptsize 8}}\csname LTb\endcsname \put(2520,1410){\makebox(0,0){\strut{}\scriptsize 9}}\csname LTb\endcsname \put(2753,470){\makebox(0,0){\strut{}\scriptsize 9}}\csname LTb\endcsname \put(2753,1410){\makebox(0,0){\strut{}\scriptsize 10}}\csname LTb\endcsname \put(3174,313){\makebox(0,0){\strut{}\scriptsize 1}}\csname LTb\endcsname \put(3174,940){\makebox(0,0){\strut{}\scriptsize 2}}\csname LTb\endcsname \put(3174,1566){\makebox(0,0){\strut{}\scriptsize 3}}\csname LTb\endcsname \put(1974,2096){\makebox(0,0){\strut{}initial frequency distribution}}}\gplgaddtomacro\gplbacktext{}\gplgaddtomacro\gplfronttext{\csname LTb\endcsname \put(4770,868){\makebox(0,0){\strut{}\scriptsize 1}}\csname LTb\endcsname \put(4770,1808){\makebox(0,0){\strut{}\scriptsize 2}}\csname LTb\endcsname \put(4984,832){\makebox(0,0){\strut{}\scriptsize 1}}\csname LTb\endcsname \put(4984,1772){\makebox(0,0){\strut{}\scriptsize 3}}\csname LTb\endcsname \put(5279,884){\makebox(0,0){\strut{}\scriptsize 4}}\csname LTb\endcsname \put(5279,1824){\makebox(0,0){\strut{}\scriptsize 5}}\csname LTb\endcsname \put(5541,94){\makebox(0,0){\strut{}\scriptsize 5}}\csname LTb\endcsname \put(5541,1034){\makebox(0,0){\strut{}\scriptsize 6}}\csname LTb\endcsname \put(5857,884){\makebox(0,0){\strut{}\scriptsize 7}}\csname LTb\endcsname \put(5857,1824){\makebox(0,0){\strut{}\scriptsize 8}}\csname LTb\endcsname \put(6125,90){\makebox(0,0){\strut{}\scriptsize 8}}\csname LTb\endcsname \put(6125,1030){\makebox(0,0){\strut{}\scriptsize 9}}\csname LTb\endcsname \put(6884,589){\makebox(0,0){\strut{}\scriptsize 1}}\csname LTb\endcsname \put(6884,1290){\makebox(0,0){\strut{}\scriptsize 2}}\csname LTb\endcsname \put(6884,1641){\makebox(0,0){\strut{}\scriptsize 3}}\csname LTb\endcsname \put(5924,2096){\makebox(0,0){\strut{}final frequency distribution}}}\gplbacktext
    \put(0,0){\includegraphics[width={396.00bp},height={113.00bp}]{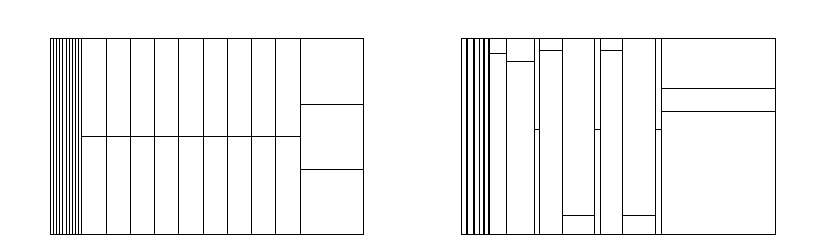}}\gplfronttext
  \end{picture}\endgroup
     \caption{Depiction of the frequency distribution in iteration $1$ and $9$ in the ANOVA terms for the $d=10$ example.}\label{s10_rates_box}
\end{figure} 

We observe an improvement of the $L_2$ error with the first $2$ iterations before it stabilizes.
The overall improvement is not as good as in the previous experiments, with an overall factor of $3$ in the $L_2$ norm accuracy.
The shape of the final frequencies again resembles the respective smoothness of the approximated function.

The outcome for the experiment with noise is depicted in \Cref{s10_cv}.
\begin{figure} \centering
    \begingroup
  \makeatletter
  \providecommand\color[2][]{\GenericError{(gnuplot) \space\space\space\@spaces}{Package color not loaded in conjunction with
      terminal option `colourtext'}{See the gnuplot documentation for explanation.}{Either use 'blacktext' in gnuplot or load the package
      color.sty in LaTeX.}\renewcommand\color[2][]{}}\providecommand\includegraphics[2][]{\GenericError{(gnuplot) \space\space\space\@spaces}{Package graphicx or graphics not loaded}{See the gnuplot documentation for explanation.}{The gnuplot epslatex terminal needs graphicx.sty or graphics.sty.}\renewcommand\includegraphics[2][]{}}\providecommand\rotatebox[2]{#2}\@ifundefined{ifGPcolor}{\newif\ifGPcolor
    \GPcolortrue
  }{}\@ifundefined{ifGPblacktext}{\newif\ifGPblacktext
    \GPblacktexttrue
  }{}\let\gplgaddtomacro\g@addto@macro
\gdef\gplbacktext{}\gdef\gplfronttext{}\makeatother
  \ifGPblacktext
\def\colorrgb#1{}\def\colorgray#1{}\else
\ifGPcolor
      \def\colorrgb#1{\color[rgb]{#1}}\def\colorgray#1{\color[gray]{#1}}\expandafter\def\csname LTw\endcsname{\color{white}}\expandafter\def\csname LTb\endcsname{\color{black}}\expandafter\def\csname LTa\endcsname{\color{black}}\expandafter\def\csname LT0\endcsname{\color[rgb]{1,0,0}}\expandafter\def\csname LT1\endcsname{\color[rgb]{0,1,0}}\expandafter\def\csname LT2\endcsname{\color[rgb]{0,0,1}}\expandafter\def\csname LT3\endcsname{\color[rgb]{1,0,1}}\expandafter\def\csname LT4\endcsname{\color[rgb]{0,1,1}}\expandafter\def\csname LT5\endcsname{\color[rgb]{1,1,0}}\expandafter\def\csname LT6\endcsname{\color[rgb]{0,0,0}}\expandafter\def\csname LT7\endcsname{\color[rgb]{1,0.3,0}}\expandafter\def\csname LT8\endcsname{\color[rgb]{0.5,0.5,0.5}}\else
\def\colorrgb#1{\color{black}}\def\colorgray#1{\color[gray]{#1}}\expandafter\def\csname LTw\endcsname{\color{white}}\expandafter\def\csname LTb\endcsname{\color{black}}\expandafter\def\csname LTa\endcsname{\color{black}}\expandafter\def\csname LT0\endcsname{\color{black}}\expandafter\def\csname LT1\endcsname{\color{black}}\expandafter\def\csname LT2\endcsname{\color{black}}\expandafter\def\csname LT3\endcsname{\color{black}}\expandafter\def\csname LT4\endcsname{\color{black}}\expandafter\def\csname LT5\endcsname{\color{black}}\expandafter\def\csname LT6\endcsname{\color{black}}\expandafter\def\csname LT7\endcsname{\color{black}}\expandafter\def\csname LT8\endcsname{\color{black}}\fi
  \fi
    \setlength{\unitlength}{0.0500bp}\ifx\gptboxheight\undefined \newlength{\gptboxheight}\newlength{\gptboxwidth}\newsavebox{\gptboxtext}\fi \setlength{\fboxrule}{0.5pt}\setlength{\fboxsep}{1pt}\definecolor{tbcol}{rgb}{1,1,1}\begin{picture}(7920.00,2540.00)\gplgaddtomacro\gplbacktext{\colorrgb{0.00,0.00,0.00}\put(1711,599){\makebox(0,0)[r]{\strut{}\scriptsize $2\cdot 10^{-4}$}}\colorrgb{0.00,0.00,0.00}\put(1711,1022){\makebox(0,0)[r]{\strut{}\scriptsize $3\cdot 10^{-4}$}}\colorrgb{0.00,0.00,0.00}\put(1711,1323){\makebox(0,0)[r]{\strut{}\scriptsize $4\cdot 10^{-4}$}}\colorrgb{0.00,0.00,0.00}\put(1711,1556){\makebox(0,0)[r]{\strut{}\scriptsize $5\cdot 10^{-4}$}}\colorrgb{0.00,0.00,0.00}\put(1711,1746){\makebox(0,0)[r]{\strut{}\scriptsize $6\cdot 10^{-4}$}}\colorrgb{0.00,0.00,0.00}\put(1711,1907){\makebox(0,0)[r]{\strut{}\scriptsize $7\cdot 10^{-4}$}}\colorrgb{0.00,0.00,0.00}\put(1711,2047){\makebox(0,0)[r]{\strut{}\scriptsize $8\cdot 10^{-4}$}}\colorrgb{0.00,0.00,0.00}\put(1711,2170){\makebox(0,0)[r]{\strut{}\scriptsize $9\cdot 10^{-4}$}}\colorrgb{0.00,0.00,0.00}\put(1711,2280){\makebox(0,0)[r]{\strut{}\scriptsize $1\cdot 10^{-3}$}}\colorrgb{0.00,0.00,0.00}\put(1812,431){\makebox(0,0){\strut{}\scriptsize $10^{3}$}}\colorrgb{0.00,0.00,0.00}\put(3245,431){\makebox(0,0){\strut{}\scriptsize $10^{4}$}}\colorrgb{0.00,0.00,0.00}\put(4677,431){\makebox(0,0){\strut{}\scriptsize $10^{5}$}}}\gplgaddtomacro\gplfronttext{\csname LTb\endcsname \put(5382,2100){\makebox(0,0)[l]{\strut{}$\FCV(S_{\Psi_0(m)}^{\bm X}\bm y)$}}\csname LTb\endcsname \put(5382,1740){\makebox(0,0)[l]{\strut{}$\|f-S_{\Psi_0(m)}^{\bm X}\bm y\|_{L_2}^{2}+\sigma^2$}}\csname LTb\endcsname \put(5382,1381){\makebox(0,0)[l]{\strut{}$\FCV(S_{\Psi_1(m)}^{\bm X}\bm y)$}}\csname LTb\endcsname \put(5382,1021){\makebox(0,0)[l]{\strut{}$\|f-S_{\Psi_1(m)}^{\bm X}\bm y\|_{L_2}^{2}+\sigma^2$}}\csname LTb\endcsname \put(5382,662){\makebox(0,0)[l]{\strut{}$\FCV(S_{\Psi_2(m)}^{\bm X}\bm y)$}}\csname LTb\endcsname \put(5382,302){\makebox(0,0)[l]{\strut{}$\|f-S_{\Psi_2(m)}^{\bm X}\bm y\|_{L_2}^{2}+\sigma^2$}}\csname LTb\endcsname \put(3245,239){\makebox(0,0){\strut{}$m$}}}\gplbacktext
    \put(0,0){\includegraphics[width={396.00bp},height={127.00bp}]{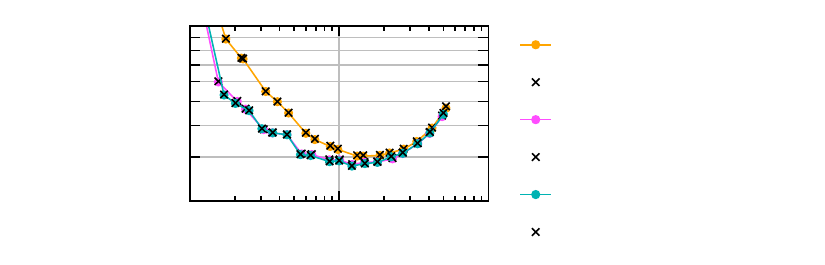}}\gplfronttext
  \end{picture}\endgroup
     \caption{Cross validation and $L_2$ error for the $d=10$ example with Gaussian noise.}\label{s10_cv}
\end{figure} The theoretical expectation of an improved convergence rate and smaller $L_2$ error is observed.
The second iteration only gives marginal gains.

 \section{Concluding remarks}\label{sec:conclusion} 

In this paper we considered the hyperparameter selection problem in that we select the shape of frequencies for the least squares ANOVA approximation based on function samples.
We set dozens of parameters based on the estimated smoothness properties of the function at hand, which we approximate from the Fourier coefficients of our approximation.

Whereas previous works for approximation \cite{SUV21} used linear information in the form of wavelet coefficients, we only relied on given samples, which is novel to the best of our knowledge.
We utilized the well-established, fast, and memory-efficient least squares ANOVA approximation, which is a linear method.
The hyperparameter tuning introduces nonlinearity in the method, which gains approximation quality without deteriorating efficiency.

Although we do not yet provide a self-contained theoretical guarantee for the entire procedure, each component of the method is supported by the theory presented in this work.
In particular, the smoothness estimation relies on a steady decay of the Fourier coefficients, which is a strong assumption and needs further investigation.
The numerical experiments show the advantage and reliability of the method. 
\section*{Acknowledgment}

The authors would like to thank Daniel Potts for the fruitful discussions and valuable suggestions during the preparation of this work.
Further, Felix~Bartel acknowledges the time spent in Sydney with funding from the ``High dimensional approximation, learning, and uncertainty'' ARC discovery project.

\printbibliography

\end{document}